\numberwithin{equation}{section}
\newtheorem{theorem}{Theorem}[section]
\newtheorem{proposition}[theorem]{Proposition}
\newtheorem{lemma}[theorem]{Lemma}
\newtheorem{corollary}[theorem]{Corollary}
\newtheorem{question}[theorem]{Question}
\newtheorem{theorem*}{Theorem}
\theoremstyle{definition}
\newtheorem{definition}[theorem]{Definition}
\newtheorem{example}[theorem]{Example}
\theoremstyle{remark}
\newtheorem{remark}[theorem]{Remark}
\newcommand{\Mod}{\operatorname{Mod}}
\newcommand{\Hom}{\operatorname{Hom}}
\newcommand{\Ext}{\operatorname{Ext}}
\newcommand{\ra}{\rightarrow}
\def\Im{\mathop{\rm Im}\nolimits}
\def\Ker{\mathop{\rm Ker}\nolimits}
\def\mod{\mathop{\rm mod}\nolimits}
\def\Mod{\mathop{\rm Mod}\nolimits}
\def\Hom{\mathop{\rm Hom}\nolimits}
\def\Ext{\mathop{\rm Ext}\nolimits}
\def\inf{\mathop{\rm inf}\nolimits}
\def\dim{\mathop{\rm dim}\nolimits}
\def\Im{\mathop{\rm Im}\nolimits}
\def\Ker{\mathop{\rm Ker}\nolimits}
\def\mod{\mathop{\rm mod}\nolimits}
\def\Mod{\mathop{\rm Mod}\nolimits}
\def\inf{\mathop{\rm inf}\nolimits}
\def\dim{\mathop{\rm dim}\nolimits}
\def\Hom{\mathop{\rm Hom}\nolimits}
\def\Ext{\mathop{\rm Ext}\nolimits}
\def\lim{\mathop{\underrightarrow{\rm lim}}\nolimits}
\def\spc{\mathop{\rm SPC}\nolimits}
\def\pc{\mathop{\rm PC}\nolimits}
\def\spe{\mathop{\rm SPE}\nolimits}
\title{ \bf Special Precovered Categories of Gorenstein Categories\thanks{2010 Mathematics Subject Classification: 18G25, 18E10.}
\thanks{Keywords: Gorenstein categories, Right 1-orthogonal categories, Special precovers,
Special precovered categories, Projectively resolving, Injectively coresolving.
 }}
\author{Tiwei Zhao\thanks{E-mail address:  tiweizhao@hotmail.com} \ and Zhaoyong Huang\thanks{E-mail address: huangzy@nju.edu.cn
} \\
{\it \footnotesize  Department of Mathematics, Nanjing University, Nanjing 210093, Jiangsu Province, P. R. China}}
\date{ }
\begin{document}

\baselineskip=16pt
\maketitle

\begin{abstract}
Let $\mathscr{A}$ be an abelian category and $\mathscr{P}(\mathscr{A})$ the subcategory of $\mathscr{A}$
consisting of projective objects. Let $\mathscr{C}$ be a full, additive and self-orthogonal subcategory
of $\mathscr{A}$ with $\mathscr{P}(\mathscr{A})$ a generator, and let $\mathcal{G}(\mathscr{C})$
be the Gorenstein subcategory of $\mathscr{A}$. Then the right 1-orthogonal category ${\mathcal{G}(\mathscr{C})^{\bot_1}}$ of
$\mathcal{G}(\mathscr{C})$ is both projectively
resolving and injectively coresolving in $\mathscr{A}$. We also get that the subcategory
$\spc(\mathcal{G}(\mathscr{C}))$ of $\mathscr{A}$ consisting of objects admitting special $\mathcal{G}(\mathscr{C})$-precovers
is closed under extensions and $\mathscr{C}$-stable direct summands (*). Furthermore, if $\mathscr{C}$ is a generator for
$\mathcal{G}(\mathscr{C})^{\perp_1}$, then we have that $\spc(\mathcal{G}(\mathscr{C}))$
is the minimal subcategory of $\mathscr{A}$ containing $\mathcal{G}(\mathscr{C})^{\perp_1}\cup \mathcal{G}(\mathscr{C})$
with respect to the property (*), and that $\spc(\mathcal{G}(\mathscr{C}))$ is $\mathscr{C}$-resolving in $\mathscr{A}$
with a $\mathscr{C}$-proper generator $\mathscr{C}$.
\end{abstract}

\pagestyle{myheadings}
\markboth{\rightline {\scriptsize   T. W. Zhao and Z. Y. Huang}}
         {\leftline{\scriptsize Special Precovered Categories of Gorenstein Categories}}


\section{Introduction} 

As a generalization of finitely generated projective modules, Auslander and Bridger introduced in \cite{AB}
the notion of finitely generated modules of Gorenstein dimension zero over commutative Noetherian rings. Then
Enochs and Jenda generalized it in \cite{EJ95} to arbitrary modules over a general ring and introduced the notion
of Gorenstein projective modules and its dual (that is, the notion of Gorenstein injective modules).
Let $\mathscr{A}$ be an abelian category and $\mathscr{C}$ an
additive and full subcategory of $\mathscr{A}$. Recently Sather-Wagstaff, Sharif
and White introduced in \cite{SSW08} the notion of the Gorenstein subcategory
$\mathcal{G}(\mathscr{C})$ of $\mathscr{A}$, which is a common generalization of
the notions of modules of Gorenstein dimension zero \cite{AB}, Gorenstein projective
modules, Gorenstein injective modules \cite{EJ95}, $V$-Gorenstein
projective modules and $V$-Gorenstein injective modules \cite{EJL}, and so on.

Let $R$ be an associative ring with identity, and let $\Mod R$ be the category of left $R$-modules
and $\mathcal{G}(\mathscr{P}(\Mod R)$ the subcategory of $\Mod R$ consisting of Gorenstein projective modules.
Let $\pc(\mathcal{G}(\mathscr{P}(\Mod R))$ and $\spc(\mathcal{G}(\mathscr{P}(\Mod R))$ be the subcategories of $\Mod R$ consisting of modules
admitting a $\mathcal{G}(\mathscr{P}(\Mod R))$-precover and admitting a special $\mathcal{G}(\mathscr{P}(\Mod R))$-precover respectively.
The following question in relative homological algebra remains still open: does $\pc(\mathcal{G}(\mathscr{P}(\Mod R))=\Mod R$
always hold true? Several authors have gave some partially positive answers to this question, see \cite{ADH, BGM, CFH11, WL}.
Note that in these references, $\pc(\mathcal{G}(\mathscr{P}(\Mod R))=\spc(\mathcal{G}(\mathscr{P}(\Mod R))$, see Example \ref{4.8} below for details.
In particular, any module in $\Mod R$ with finite Gorenstein projective dimension admits a $\mathcal{G}(\mathscr{P}(\Mod R))$-precover
which is also special (\cite{Ho04}). In fact, it is unknown whether $\pc(\mathcal{G}(\mathscr{P}(\Mod R))=\spc(\mathcal{G}(\mathscr{P}(\Mod R))$
always holds true. Based on the above, it is necessary to study the properties of these two subcategories.

Let $\mathscr{A}$ be an abelian category and $\mathscr{C}$ an
additive and full subcategory of $\mathscr{A}$. We use $\spc(\mathcal{G}(\mathscr{C}))$ to denote the subcategory of
$\mathscr{A}$ consisting of objects admitting special $\mathcal{G}(\mathscr{C})$-precovers. The aim of this paper
is to investigate the structure of $\spc(\mathcal{G}(\mathscr{C}))$ in terms of the properties of the right 1-orthogonal
category $\mathcal{G}(\mathscr{C})^{\perp_1}$ of $\mathcal{G}(\mathscr{C})$.
This paper is organized as follows.

In Section 2, we give some terminology and some preliminary results.

Assume that $\mathscr{C}$ is self-orthogonal and the subcategory of $\mathscr{A}$ consisting of projective objects
is a generator for $\mathscr{C}$. In Section 3, we prove that $\mathcal{G}(\mathscr{C})^{\perp_1}$
is both projectively resolving and injectively coresolving in $\mathscr{A}$.
We also characterize when all objects in $\mathscr{A}$ are in $\mathcal{G}(\mathscr{C})^{\perp_1}$.

In Section 4, we prove that $\spc(\mathcal{G}(\mathscr{C}))$
is closed under extensions and $\mathscr{C}$-stable direct summands (*). Furthermore,
if $\mathscr{C}$ is a generator for $\mathcal{G}(\mathscr{C})^{\perp_1}$, then we get the following two results:
(1) $\spc(\mathcal{G}(\mathscr{C}))$ is the minimal subcategory of $\mathscr{A}$ containing
$\mathcal{G}(\mathscr{C})^{\perp_1}\cup \mathcal{G}(\mathscr{C})$ with respect to the property (*);
and (2) $\spc(\mathcal{G}(\mathscr{C}))$ is $\mathscr{C}$-resolving in $\mathscr{A}$ with a $\mathscr{C}$-proper generator $\mathscr{C}$.

\section{Preliminaries}

Throughout this paper, $\mathscr{A}$ is an abelian category and all subcategories of $\mathscr{A}$ are full, additive and
closed under isomorphisms. We use $\mathscr{P}(\mathscr{A})$
(resp. $\mathscr{I}(\mathscr{A})$) to denote the subcategory of $\mathscr{A}$ consisting of projective
(resp. injective) objects. For a subcategory $\mathscr{C}$ of $\mathscr{A}$ and an object $A$ in $\mathscr{A}$,
the {\it $\mathscr{C}$-dimension} $\mathscr{C}{\text-}\dim A$ of $A$
is defined as $\inf\{n\geq 0\mid$ there exists an exact sequence
$$0\to C_{n} \to \cdots \to C_{1} \to C_{0} \to A \to 0$$ in $\mathscr{A}$
with all $C_i$ in $\mathscr{C}\}$. Set $\mathscr{C}{\text-}\dim A=\infty$ if no such integer exists (cf. \cite{Hu14}).
For a non-negative integer $n$, we use $\mathscr{C}^{\leq n}$ (resp. $\mathscr{C}^{<\infty}$) to denote the subcategory
of $\mathscr{A}$ consisting of objects with $\mathscr{C}$-dimension at most $n$ (resp. finite $\mathscr{C}$-dimension).

Let $\mathscr{X}$ be a subcategory of $\mathscr{A}$. Recall that a sequence in
$\mathscr{A}$ is called {\it $\Hom_{\mathscr{A}}(\mathscr{X},-)$-exact} if it is exact after
applying the functor $\Hom_{\mathscr{A}}(X,-)$ for any $X\in\mathscr{X}$. Dually,
the notion of a {\it $\Hom_{\mathscr{A}}(-, \mathscr{X})$-exact sequence} is defined.
Set
$$\mathscr{X}^\perp:=\{M\mid \Ext^{\geq 1}_\mathscr{A}(X,M)=0\ {\rm for\ any}\ X\in \mathscr{X}\},$$
$${^\perp\mathscr{X}}:=\{M\mid \Ext^{\geq 1}_\mathscr{A}(M,X)=0\ {\rm for\ any}\ X\in \mathscr{X}\},$$
and
$${\mathscr{X}^{\perp_1}}:=\{M\mid \Ext^{1}_\mathscr{A}(X,M)=0\ {\rm for\ any}\ X\in \mathscr{X}\},$$
$${{^{\perp_1}\mathscr{X}}}:=\{M\mid \Ext^{1}_\mathscr{A}(M,X)=0\ {\rm for\ any}\ X\in \mathscr{X}\}.$$
We call ${\mathscr{X}^{\perp_1}}$ (resp. ${{^{\perp_1}\mathscr{X}}}$) the {\it right (resp. left) 1-orthogonal category of $\mathscr{X}$}.
Let $\mathscr{X}$ and $\mathscr{Y}$ be subcategories of $\mathscr{A}$.
We write $\mathscr{X}\perp \mathscr{Y}$ if $\Ext^{\geq 1}_\mathscr{A}(X,Y)=0$ for any $X\in \mathscr{X}$
and $Y\in \mathscr{Y}$.

\vspace{0.2cm}

\begin{definition} \label{2.1}
{ (cf. \cite{E}) Let $\mathscr{X}\subseteq\mathscr{Y}$ be
subcategories of $\mathscr{A}$. The morphism $f: X\to Y$ in
$\mathscr{A}$ with $X\in\mathscr{X}$ and $Y\in \mathscr{Y}$ is called an
{\it $\mathscr{X}$-precover} of $Y$
if $\Hom_{\mathscr{A}}(X',f)$ is epic for any $X'\in\mathscr{X}$.
An $\mathscr{X}$-precover $f: X\to Y$ is called {\it special} if $f$ is
epic and $\Ker f\in \mathscr{X}^{\bot_1}$. $\mathscr{X}$ is called {\it special precovering}
in $\mathscr{Y}$ if any object in $\mathscr{Y}$ admits a special $\mathscr{X}$-precover.
Dually, the notions of a {\it (special) $\mathscr{X}$-(pre)envelope} and a
{\it special preenveloping subcategory} are defined.}
\end{definition}

\begin{definition} \label{2.2}
{ (cf. \cite{Ho04}) A subcategory of $\mathscr{A}$ is called {\it projectively resolving}
if it contains $\mathscr{P}(\mathscr{A})$ and is closed under extensions and under kernels of epimorphisms.
Dually, the notion of {\it injectively coresolving subcategories} is defined.}
\end{definition}


From now on, assume that $\mathscr{C}$ is a given subcategory of $\mathscr{A}$.

\begin{definition} \label{2.3}
{ (cf. \cite{SSW08}) The {\it Gorenstein subcategory}
$\mathcal{G}(\mathscr{C})$ of $\mathscr{A}$ is defined as
$\mathcal{G}(\mathscr{C})=\{M$ is an object in $\mathscr{A}\mid$
there exists an exact sequence: $$\cdots \to C_1 \to C_0 \to C^0 \to
C^1 \to \cdots \eqno{(2.1)}$$ in $\mathscr{C}$, which is both
$\Hom_{\mathscr{A}}(\mathscr{C},-)$-exact and
$\Hom_{\mathscr{A}}(-,\mathscr{C})$-exact, such that $M\cong
\Im(C_0\to C^0)\}$; in this case, $(2.1)$ is called a {\it complete
$\mathscr{C}$-resolution} of $M$.}
\end{definition}

In what follows, $R$ is an associative ring with identity, $\Mod R$ is
the category of left $R$-modules and $\mod R$ is
the category of finitely generated left $R$-modules.

\begin{remark} \label{2.4}{
\begin{enumerate}
\item[]
\item[(1)]
Let $R$ be a left and right Noetherian ring. Then
$\mathcal{G}(\mathscr{P}(\mod R))$ coincides with the subcategory of $\mod
R$ consisting of modules with Gorenstein dimension zero (\cite{AB}).

\item[(2)] $\mathcal{G}(\mathscr{P}(\Mod R))$
(resp. $\mathcal{G}(\mathscr{I}(\Mod R))$)
coincides with the subcategory of $\Mod R$ consisting of Gorenstein
projective (resp. injective) modules (\cite{EJ95}).

\item[(3)] Let $R$ be a left Noetherian ring, $S$ a right Noetherian ring
and $_RV_S$ a dualizing bimodule. Put $\mathscr{W}=\{V\bigotimes_SP\mid P\in
\mathscr{P}(\Mod S)\}$ and $\mathscr{U}=\{\Hom_S(V,E)\mid E\in
\mathscr{I}(\Mod S^{op})\}$. Then $\mathcal{G}(\mathscr{W})$
(resp. $\mathcal{G}(\mathscr{U})$) coincides
with the subcategory of $\Mod R$ consisting of $V$-Gorenstein
projective (resp. injective) modules (\cite{EJL}).
\end{enumerate}}
\end{remark}

\begin{definition}\label{2.5} {
(cf. \cite{SSW08}) Let $\mathscr{X}\subseteq\mathscr{T}$ be
subcategories of $\mathscr{A}$. Then $\mathscr{X}$ is called a {\it generator} (resp.
{\it cogenerator}) for $\mathscr{T}$ if for any $T\in
\mathscr{T}$, there exists an exact sequence $0\to T' \to X \to
T \to 0$ (resp. $0\to T \to X \to T' \to 0$) in $\mathscr{T}$ with $X\in\mathscr{X}$;
and $\mathscr{X}$ is called a {\it projective generator} (resp.
{\it an injective cogenerator})  for $\mathscr{T}$  if $\mathscr{X}$ is a generator (resp.
cogenerator) for $\mathscr{T}$ and $\mathscr{X}\perp \mathscr{T}$ (resp. $\mathscr{T}\perp \mathscr{X}$).}
\end{definition}

We have the following easy observation.

\begin{lemma}\label{2.6}
Assume that $\mathscr{C}\perp \mathscr{C}$ and $\mathscr{P}(\mathscr{A})$ is a generator for $\mathscr{C}$.
Then for any $G\in \mathcal{G}(\mathscr{C})$, there exists a $\Hom_{\mathscr{A}}(\mathscr{C},-)$-exact and
$\Hom_{\mathscr{A}}(-,\mathscr{C})$-exact exact sequence
$$0\ra G'\ra P\ra G\ra 0$$ in $\mathscr{A}$ with $P\in \mathscr{P}(\mathscr{A})$ and $G'\in \mathcal{G}(\mathscr{C})$.
\end{lemma}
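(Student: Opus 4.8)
\textit{Proof proposal (a plan).} The plan is to build the required sequence by splicing two short exact sequences and then to verify, one by one, the two Hom-exactness conditions and the membership $G'\in\mathcal{G}(\mathscr{C})$, using the standard structural properties of Gorenstein subcategories. First I would use that $G\in\mathcal{G}(\mathscr{C})$ admits a complete $\mathscr{C}$-resolution $\cdots\to C_1\to C_0\to C^0\to C^1\to\cdots$ with $G\cong\Im(C_0\to C^0)$. Truncating this complex at $C_0$ yields an exact sequence $0\to K\to C_0\to G\to 0$ in $\mathscr{A}$ with $C_0\in\mathscr{C}$ and $K\cong\Im(C_1\to C_0)\in\mathcal{G}(\mathscr{C})$ (the shifted complex is a complete $\mathscr{C}$-resolution of $K$); since the complete resolution is $\Hom_{\mathscr{A}}(\mathscr{C},-)$-exact and $\Hom_{\mathscr{A}}(-,\mathscr{C})$-exact as a complex, this short exact sequence inherits both properties. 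Next, applying the hypothesis that $\mathscr{P}(\mathscr{A})$ is a generator for $\mathscr{C}$ to the object $C_0\in\mathscr{C}$, I obtain an exact sequence $0\to C_0'\to P\to C_0\to 0$ with $P\in\mathscr{P}(\mathscr{A})$ and $C_0'\in\mathscr{C}$; because $\mathscr{C}\perp\mathscr{C}$ (so that $\Ext^1_{\mathscr{A}}(\mathscr{C},C_0')=0=\Ext^1_{\mathscr{A}}(C_0,\mathscr{C})$), this sequence is automatically $\Hom_{\mathscr{A}}(\mathscr{C},-)$- and $\Hom_{\mathscr{A}}(-,\mathscr{C})$-exact.

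I would then let $G'$ be the kernel of the composite epimorphism $P\thra C_0\thra G$ and apply the $3\times 3$ lemma to the evident commutative diagram (rows $0\to G'\to P\to G\to 0$ and $0\to K\to C_0\to G\to 0$, with identity on $G$) to get a short exact sequence $0\to C_0'\to G'\to K\to 0$. As $C_0'\in\mathscr{C}\subseteq\mathcal{G}(\mathscr{C})$ and $K\in\mathcal{G}(\mathscr{C})$, and $\mathcal{G}(\mathscr{C})$ is closed under extensions (here the self-orthogonality of $\mathscr{C}$ enters, via \cite{SSW08}), it follows that $G'\in\mathcal{G}(\mathscr{C})$. It remains to check that $0\to G'\to P\to G\to 0$ is $\Hom_{\mathscr{A}}(\mathscr{C},-)$- and $\Hom_{\mathscr{A}}(-,\mathscr{C})$-exact: applying $\Hom_{\mathscr{A}}(C,-)$ and $\Hom_{\mathscr{A}}(-,C)$ for $C\in\mathscr{C}$, the only possible obstructions to exactness are $\Ext^1_{\mathscr{A}}(C,G')$ and $\Ext^1_{\mathscr{A}}(G,C)$, and both vanish because $\Ext^{\geq 1}_{\mathscr{A}}(\mathscr{C},\mathcal{G}(\mathscr{C}))=0=\Ext^{\geq 1}_{\mathscr{A}}(\mathcal{G}(\mathscr{C}),\mathscr{C})$ — a consequence of $\mathscr{C}\perp\mathscr{C}$ together with the two-sided nature of complete $\mathscr{C}$-resolutions (one runs the dimension-shifting argument using, respectively, the right half and the left half of the complete resolution). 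This gives the asserted sequence.

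The diagram chase and the Hom-exactness bookkeeping are routine. The substantive inputs are the standard facts that $\mathscr{C}\subseteq\mathcal{G}(\mathscr{C})$, that $\mathcal{G}(\mathscr{C})$ is closed under extensions, and that $\Ext^{\geq 1}_{\mathscr{A}}$ between $\mathscr{C}$ and $\mathcal{G}(\mathscr{C})$ vanishes in both directions; so the main obstacle is simply having these at one's disposal — if they were not available one would first have to extract them from the definition of a complete $\mathscr{C}$-resolution — after which the present argument is short, which is why the statement is recorded here as an easy observation.
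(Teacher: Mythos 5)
Your proposal is correct and follows essentially the same route as the paper: both splice the first syzygy sequence $0\to K\to C_0\to G\to 0$ of a complete $\mathscr{C}$-resolution of $G$ with a $\mathscr{P}(\mathscr{A})$-generator sequence for $C_0$ via a pullback (your ``kernel of the composite'' is exactly that pullback). The only difference is in the final bookkeeping: the paper cites \cite[Lemma 2.5]{Hu} for the Hom-bi-exactness of the resulting row and \cite[Proposition 4.7]{Hu} for $G'\in\mathcal{G}(\mathscr{C})$, whereas you deduce the membership from closure of $\mathcal{G}(\mathscr{C})$ under extensions and the Hom-exactness from $\mathcal{G}(\mathscr{C})\subseteq{}^{\perp}\mathscr{C}\cap\mathscr{C}^{\perp}$; both justifications are available under the standing hypotheses, so this is a cosmetic rather than substantive divergence.
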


\begin{proof}
Let $G\in \mathcal{G}(\mathscr{C})$. Then there exists a $\Hom_{\mathscr{A}}(\mathscr{C},-)$-exact and
$\Hom_{\mathscr{A}}(-,\mathscr{C})$-exact exact sequence
$$0\ra G_1\ra C_0\ra G\ra 0$$ in $\mathscr{A}$ with $C_0\in \mathscr{C}$ and $G_1\in \mathcal{G}(\mathscr{C})$.
Because $\mathscr{P}(\mathscr{A})$ is a generator for $\mathscr{C}$ by assumption, there exists an exact sequence
$$0\ra C'\ra P\ra C_0\ra 0$$ in $\mathscr{A}$ with $P\in \mathscr{P}(\mathscr{A})$ and $C'\in \mathscr{C}$.
Consider the following pullback diagram
$$
\xymatrix@R=20pt@C=20pt{ & 0 \ar@{-->}[d] & 0 \ar[d]& &\\
 & C'\ar@{==}[r] \ar@{-->}[d] &C' \ar[d]& &\\
0 \ar@{-->}[r] & G' \ar@{-->}[d] \ar@{-->}[r] & P \ar[d] \ar@{-->}[r] &G \ar@{==}[d] \ar@{-->}[r] & 0\\
0 \ar[r] & G_1 \ar[r] \ar[d] & C_0 \ar[r] \ar[d] & G \ar[r] & 0 &\\
 & 0 & 0. & & }
$$
By \cite[Lemma 2.5]{Hu}, the middle row is both $\Hom_{\mathscr{A}}(\mathscr{C},-)$-exact and $\Hom_{\mathscr{A}}(-,\mathscr{C})$-exact,
and hence $G'\in \mathcal{G}(\mathscr{C})$ by \cite[Proposition 4.7]{Hu}, that is, the middle row is the desired sequence.
\end{proof}

The following result is useful in the sequel.

\begin{proposition}\label{2.7}
Assume that $\mathscr{C}\perp \mathscr{C}$ and $\mathscr{P}(\mathscr{A})$ is a generator for $\mathscr{C}$.  Then
\begin{enumerate}
\item[(1)] $\mathcal{G}(\mathscr{C})^{\perp_1}=\mathcal{G}(\mathscr{C})^{\perp}$.
\item[(2)] $\mathcal{G}(\mathscr{C})\subseteq {^{\bot}\mathscr{C}\cap \mathscr{C}^{\bot}}$.
\end{enumerate}
\end{proposition}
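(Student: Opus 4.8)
\textbf{Proof proposal for Proposition \ref{2.7}.}

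The plan is to prove both statements by exploiting the complete $\mathscr{C}$-resolutions that define $\mathcal{G}(\mathscr{C})$, together with Lemma \ref{2.6}, which produces short exact sequences $0\to G'\to P\to G\to 0$ with $P\in\mathscr{P}(\mathscr{A})$ and $G'\in\mathcal{G}(\mathscr{C})$ that are moreover $\Hom_{\mathscr{A}}(\mathscr{C},-)$- and $\Hom_{\mathscr{A}}(-,\mathscr{C})$-exact. For (1), the containment $\mathcal{G}(\mathscr{C})^{\perp}\subseteq\mathcal{G}(\mathscr{C})^{\perp_1}$ is trivial, so the work is the reverse inclusion. Fix $M\in\mathcal{G}(\mathscr{C})^{\perp_1}$ and $G\in\mathcal{G}(\mathscr{C})$; I want $\Ext^{i}_{\mathscr{A}}(G,M)=0$ for all $i\geq 1$. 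Apply Lemma \ref{2.6} to get $0\to G'\to P\to G\to 0$ with $G'\in\mathcal{G}(\mathscr{C})$ and $P$ projective. The long exact sequence in $\Ext_{\mathscr{A}}(-,M)$ gives, for $i\geq 1$, an isomorphism $\Ext^{i}_{\mathscr{A}}(G',M)\cong\Ext^{i+1}_{\mathscr{A}}(G,M)$ (using $\Ext^{\geq 1}_{\mathscr{A}}(P,M)=0$). Since $\mathcal{G}(\mathscr{C})$ is closed under the relevant syzygies, iterating this dimension-shift reduces $\Ext^{i+1}_{\mathscr{A}}(G,M)$ down to $\Ext^{1}_{\mathscr{A}}(G^{(i)},M)$ for some $G^{(i)}\in\mathcal{G}(\mathscr{C})$, which vanishes because $M\in\mathcal{G}(\mathscr{C})^{\perp_1}$. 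Hence $\Ext^{\geq 1}_{\mathscr{A}}(G,M)=0$, i.e. $M\in\mathcal{G}(\mathscr{C})^{\perp}$.

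For (2), I need $\mathcal{G}(\mathscr{C})\subseteq{^{\perp}\mathscr{C}}$ and $\mathcal{G}(\mathscr{C})\subseteq\mathscr{C}^{\perp}$. Let $G\in\mathcal{G}(\mathscr{C})$ with complete $\mathscr{C}$-resolution $\cdots\to C_1\to C_0\to C^0\to C^1\to\cdots$, so $G\cong\Im(C_0\to C^0)$. For $\mathscr{C}^{\perp}$: break the left half of the resolution into short exact sequences $0\to G_{k+1}\to C_k\to G_k\to 0$ with $G_0=G$ and each $G_k\in\mathcal{G}(\mathscr{C})$; since $\mathscr{C}\perp\mathscr{C}$, each such sequence is $\Hom_{\mathscr{A}}(-,\mathscr{C})$-exact (this is part of the definition), and a standard dimension-shifting argument shows $\Ext^{i}_{\mathscr{A}}(G,C)\cong\Ext^{i+1}_{\mathscr{A}}(G_1,C)\cong\cdots$ collapses because the relevant $\Hom$-sequences stay exact and $C_k\in\mathscr{C}$ has $\Ext^{\geq 1}_{\mathscr{A}}(C_k,C)=0$; concretely, the $\Hom_{\mathscr{A}}(-,\mathscr{C})$-exactness of the resolution kills $\Ext^{1}$ at each stage and induction kills the higher ones, giving $G\in\mathscr{C}^{\perp}$. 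For ${^{\perp}\mathscr{C}}$: symmetrically use the right half $0\to G^k\to C^k\to G^{k+1}\to 0$ with $G^0=G$, which is $\Hom_{\mathscr{A}}(\mathscr{C},-)$-exact, and dimension-shift: $\Ext^{i}_{\mathscr{A}}(C,G)\cong\Ext^{i-1}_{\mathscr{A}}(C,G^1)\cong\cdots$, with vanishing driven by $\Ext^{\geq 1}_{\mathscr{A}}(C,C^k)=0$ and the $\Hom_{\mathscr{A}}(\mathscr{C},-)$-exactness at the bottom. Alternatively, part (2) can be obtained as a special case of \cite[Proposition 4.7]{Hu} or by citing that each object of $\mathcal{G}(\mathscr{C})$ is built from $\mathscr{C}$ via such resolutions; either way the content is the same dimension shift.

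The main obstacle, and the only genuinely non-formal step, is the reverse inclusion in (1): getting from vanishing of $\Ext^{1}$ against all of $\mathcal{G}(\mathscr{C})$ to vanishing of all higher $\Ext$. The crux is that $\mathcal{G}(\mathscr{C})$ is closed under the syzygies appearing in Lemma \ref{2.6} (so that the dimension-shift stays inside the class), which is exactly what Lemma \ref{2.6} together with \cite[Proposition 4.7]{Hu} guarantees; once that is in hand the induction is routine. I would present (1) first since (2) is then a clean corollary of the same resolution bookkeeping, or can simply be quoted.
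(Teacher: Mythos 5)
Your proof of part (1) is essentially the paper's own: the authors apply Lemma \ref{2.6} to get $0\to G'\to P\to G\to 0$ and dimension-shift, using that $\Ext^1_{\mathscr{A}}(-,M)$ vanishes on all of $\mathcal{G}(\mathscr{C})$ (hence on each successive syzygy $G'$) to climb to $\Ext^{\geq 1}_{\mathscr{A}}(G,M)=0$. For part (2) the paper simply cites \cite[Lemma 5.7]{Hu}, whereas you write out the standard dimension shift along the two halves of a complete $\mathscr{C}$-resolution; that argument is correct in substance, but note that you have the two labels interchanged: computing $\Ext^i_{\mathscr{A}}(G,C)$ from the left half $0\to G_{k+1}\to C_k\to G_k\to 0$ establishes $G\in{^{\perp}\mathscr{C}}$ (with the isomorphism running $\Ext^{i+1}_{\mathscr{A}}(G,C)\cong\Ext^{i}_{\mathscr{A}}(G_1,C)$, not the direction you wrote), while computing $\Ext^i_{\mathscr{A}}(C,G)$ from the right half establishes $G\in\mathscr{C}^{\perp}$. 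Since you prove both containments, the conclusion stands; only the headings need swapping.
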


\begin{proof}
(1) It suffices to prove that $\mathcal{G}(\mathscr{C})^{\perp_1}\subseteq\mathcal{G}(\mathscr{C})^{\perp}$.
Let $M\in \mathcal{G}(\mathscr{C})^{\perp_1}$ and $G\in \mathcal{G}(\mathscr{C})$. By Lemma \ref{2.6}, we have an exact sequence
$$0\ra G'\ra P\ra G\ra 0$$ in $\mathscr{A}$ with $P\in \mathscr{P}(\mathscr{A})$ and $G'\in \mathcal{G}(\mathscr{C})$.
It induces $\Ext^2_\mathscr{A}(G,M)\cong\Ext^1_\mathscr{A}(G',M)=0$, and hence $\Ext^2_\mathscr{A}(G',M)=0$ and
$\Ext^3_\mathscr{A}(G,M)\cong\Ext^2_\mathscr{A}(G',M)=0$. Repeating this process, we get $\Ext^{\geq 1}_\mathscr{A}(G,M)=0$.

(2) See \cite[Lemma 5.7]{Hu}.
\end{proof}

We remark that if $\mathscr{A}$ has enough projective objects, and if $\mathscr{P}(\mathscr{A})\subseteq \mathscr{C}$
and $\mathscr{C}$ is closed under kernels of epimorphisms,
then $\mathscr{P}(\mathscr{A})$ is a generator for $\mathscr{C}$.

\section{The right 1-orthogonal category of $\mathcal{G}(\mathscr{C})$}

In the rest of this paper, assume that the subcategory $\mathscr{C}$ is self-orthogonal (that is, $\mathscr{C}\perp\mathscr{C}$)
and $\mathscr{P}(\mathscr{A})$ is a generator for $\mathscr{C}$. In this section, we mainly investigate
the homological properties of $\mathcal{G}(\mathscr{C})^{\perp_1}$. We begin with some examples of $\mathcal{G}(\mathscr{C})^{\perp_1}$.

\begin{example}\label{3.1}
{ \begin{enumerate}
\item[]
\item[(1)] By Proposition \ref{2.7} and \cite[Theorem 5.8]{Hu}, we have $\mathscr{P}(\mathscr{A})
\subseteq\mathscr{C}\subseteq{\mathscr{C}}^{<\infty}\subseteq \mathcal{G}(\mathscr{C})^{\perp_1}$.
\item[(2)] $\mathscr{P}(\mathscr{A})^{<\infty}\cup\mathscr{I}(\mathscr{A})^{<\infty}\subseteq \mathcal{G}(\mathscr{C})^{\perp_1}$.
\item[(3)] If the global dimension of $R$ is finite, then $\mathcal{G}(\mathscr{P}(\Mod R))^{\perp_1}=\Mod R$.
\item[(4)] By \cite[Theorem 11.5.1]{EJ} and \cite[Theorem 31.9]{AF}, we have that $R$
is quasi-Frobenius if and only if $\mathcal{G}(\mathscr{P}(\Mod R))^{\perp_1}=\mathscr{I}(\Mod R)$, and if and only if
$\mathcal{G}(\mathscr{P}(\Mod R))^{\perp_1}=\mathscr{P}(\Mod R)=\mathscr{I}(\Mod R)$.
\end{enumerate}}
\end{example}

For a non-negative integer $n$, recall that a left and right noetherian ring $R$ is called {\it $n$-Gorenstein}
if the left and right self-injective dimensions of $R$ are at most $n$. The following result is a generalization
of Example \ref{3.1}(4).

\begin{example}\label{3.2}
If $R$ is $n$-Gorenstein, then
$$\mathcal{G}(\mathscr{P}(\Mod R))^{\perp_1}=\mathscr{P}(\Mod R)^{\leq n}=\mathscr{P}(\Mod R)^{<\infty}
=\mathscr{I}(\Mod R)^{\leq n}=\mathscr{I}(\Mod R)^{<\infty}.$$
\end{example}

\begin{proof}
By \cite[Theorem 2]{Iw80} and Example \ref{3.1}(2), we have $$\mathscr{P}(\Mod R)^{\leq n}=\mathscr{P}(\Mod R)^{<\infty}=
\mathscr{I}(\Mod R)^{\leq n}=\mathscr{I}(\Mod R)^{<\infty}\subseteq \mathcal{G}(\mathscr{P}(\Mod R))^{\perp_1}.$$

Now let $M\in \mathcal{G}(\mathscr{P}(\Mod R))^{\perp_1}$ and $N\in \Mod R$. Since $R$ is $n$-Gorenstein, there exists an exact sequence
$$0\ra G_n\ra P_{n-1}\ra \cdots \ra  P_0\ra M\ra 0$$ in $\Mod R$ with all $P_i$ in $\mathscr{P}(\Mod R)$ and $G_n\in
\mathcal{G}(\mathscr{P}(\Mod R))$ by \cite[Theorem 11.5.1]{EJ}. Then we have $\Ext^{n+1}_R(N,M)\cong \Ext^1_R(G_n,M)=0$
and $M\in\mathscr{I}(\Mod R)^{\leq n}$, and thus $\mathcal{G}(\mathscr{P}(\Mod R))^{\perp_1}\subseteq\mathscr{I}(\Mod R)^{\leq n}$.
\end{proof}

The following result shows that $\mathcal{G}(\mathscr{C})^{\perp_1}$ behaves well.

\begin{theorem}\label{3.3}
\begin{enumerate}
\item[]
\item[(1)] $\mathcal{G}(\mathscr{C})^{\perp_1}$ is closed under direct products, direct summands and extensions.
\item[(2)] $\mathcal{G}(\mathscr{C})^{\perp_1}$ is projectively resolving in $\mathscr{A}$.
\item[(3)] $\mathcal{G}(\mathscr{C})^{\perp_1}$ is injectively coresolving in $\mathscr{A}$.
\end{enumerate}
\end{theorem}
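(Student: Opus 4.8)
The plan is to prove the three parts of Theorem~\ref{3.3} in the order listed, since (2) and (3) both build on (1). For part~(1), closure under direct products follows immediately from the fact that $\Ext^1_{\mathscr{A}}(G,-)$ commutes with products in the second variable, and closure under direct summands follows from the additivity of $\Ext^1_{\mathscr{A}}(G,-)$. Closure under extensions is the routine long-exact-sequence argument: given $0\ra M'\ra M\ra M''\ra 0$ with $M',M''\in\mathcal{G}(\mathscr{C})^{\perp_1}$, apply $\Hom_{\mathscr{A}}(G,-)$ for $G\in\mathcal{G}(\mathscr{C})$ and read off $\Ext^1_{\mathscr{A}}(G,M)=0$ from the sandwich between $\Ext^1_{\mathscr{A}}(G,M')=0$ and $\Ext^1_{\mathscr{A}}(G,M'')=0$.

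For part~(2), recall that projectively resolving means: contains $\mathscr{P}(\mathscr{A})$, closed under extensions, closed under kernels of epimorphisms. The first holds by Example~\ref{3.1}(1), the second by part~(1). For kernels of epimorphisms, take an exact sequence $0\ra M'\ra M\ra M''\ra 0$ with $M,M''\in\mathcal{G}(\mathscr{C})^{\perp_1}$; I want $M'\in\mathcal{G}(\mathscr{C})^{\perp_1}$. The crucial point is that by Proposition~\ref{2.7}(1) we have $\mathcal{G}(\mathscr{C})^{\perp_1}=\mathcal{G}(\mathscr{C})^{\perp}$, so $M$ and $M''$ actually kill \emph{all} higher Ext from $\mathcal{G}(\mathscr{C})$. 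Then for $G\in\mathcal{G}(\mathscr{C})$ the long exact sequence gives $\Ext^1_{\mathscr{A}}(G,M')$ sitting between $\Ext^1_{\mathscr{A}}(G,M)=0$ and $\Ext^2_{\mathscr{A}}(G,M'')=0$, forcing $\Ext^1_{\mathscr{A}}(G,M')=0$. Dually, for part~(3), injectively coresolving means: contains $\mathscr{I}(\mathscr{A})$ (Example~\ref{3.1}(2)), closed under extensions (part~(1)), closed under cokernels of monomorphisms. Given $0\ra M'\ra M\ra M''\ra 0$ with $M',M\in\mathcal{G}(\mathscr{C})^{\perp_1}=\mathcal{G}(\mathscr{C})^{\perp}$, the long exact sequence puts $\Ext^1_{\mathscr{A}}(G,M'')$ between $\Ext^1_{\mathscr{A}}(G,M)=0$ and $\Ext^2_{\mathscr{A}}(G,M')=0$, so $M''\in\mathcal{G}(\mathscr{C})^{\perp_1}$.

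The step I expect to be the genuine engine of the proof — and the only one that is not pure diagram chasing — is the identity $\mathcal{G}(\mathscr{C})^{\perp_1}=\mathcal{G}(\mathscr{C})^{\perp}$ of Proposition~\ref{2.7}(1), which rests on Lemma~\ref{2.6}: every Gorenstein object $G$ fits into a $\Hom_{\mathscr{A}}(\mathscr{C},-)$- and $\Hom_{\mathscr{A}}(-,\mathscr{C})$-exact short exact sequence $0\ra G'\ra P\ra G\ra 0$ with $P$ projective and $G'\in\mathcal{G}(\mathscr{C})$ again. This dimension-shifting along a projective lets one convert higher Ext vanishing into $\Ext^1$ vanishing, and it is what makes the one-step hypotheses in each part upgrade to the all-degrees statements needed for the long exact sequences above. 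Since Proposition~\ref{2.7} and Lemma~\ref{2.6} are already established in the excerpt, the remaining work in Theorem~\ref{3.3} is essentially formal; I would simply invoke them and run the three long exact sequences.
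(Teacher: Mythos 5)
Your parts (1) and (3) are fine and coincide with the paper's treatment: (1) is formal, and for (3) the relevant segment of the long exact sequence really is $\Ext^1_{\mathscr{A}}(G,M)\to\Ext^1_{\mathscr{A}}(G,M'')\to\Ext^2_{\mathscr{A}}(G,M')$, so once Proposition~\ref{2.7}(1) upgrades the hypotheses to all-degrees vanishing, closure under cokernels of monomorphisms drops out. But your argument for closure under kernels of epimorphisms in part (2) contains a genuine error. For $0\to M'\to M\to M''\to 0$, the long exact sequence in the second variable reads
$$\Hom_{\mathscr{A}}(G,M)\to\Hom_{\mathscr{A}}(G,M'')\to\Ext^1_{\mathscr{A}}(G,M')\to\Ext^1_{\mathscr{A}}(G,M)\to\cdots,$$
so $\Ext^1_{\mathscr{A}}(G,M')$ is \emph{not} sandwiched between $\Ext^1_{\mathscr{A}}(G,M)$ and $\Ext^2_{\mathscr{A}}(G,M'')$; it is the cokernel of $\Hom_{\mathscr{A}}(G,M)\to\Hom_{\mathscr{A}}(G,M'')$, and nothing in your hypotheses makes that map surjective. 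What the long exact sequence does give for free is only $\Ext^{\geq 2}_{\mathscr{A}}(G,M')=0$. Killing $\Ext^1_{\mathscr{A}}(G,M')$ is exactly the hard point of the theorem, and it is where the Gorenstein structure of $G$ (not just Proposition~\ref{2.7}(1)) must be used.

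The paper's fix is a cosyzygy-plus-splitting trick. Take $0\to G\to C^0\to G^1\to 0$ from the complete $\mathscr{C}$-resolution of $G$, choose an epimorphism $P^0\thra C^0$ with $P^0$ projective and kernel $C^{-1}\in\mathscr{C}$ (using that $\mathscr{P}(\mathscr{A})$ generates $\mathscr{C}$), and form the pullback. This yields $0\to G^0\to P^0\to G^1\to 0$ and $0\to C^{-1}\to G^0\to G\to 0$. Since $P^0$ is projective and $\Ext^{\geq 2}_{\mathscr{A}}(G^1,M')=0$ by the first step, dimension shifting gives $\Ext^1_{\mathscr{A}}(G^0,M')\cong\Ext^2_{\mathscr{A}}(G^1,M')=0$. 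The sequence $0\to C^{-1}\to G^0\to G\to 0$ splits because $\mathcal{G}(\mathscr{C})\subseteq{}^{\perp}\mathscr{C}$ (Proposition~\ref{2.7}(2)), so $G$ is a direct summand of $G^0$ and $\Ext^1_{\mathscr{A}}(G,M')=0$. You should either reproduce this argument or otherwise justify the surjectivity of $\Hom_{\mathscr{A}}(G,M)\to\Hom_{\mathscr{A}}(G,M'')$; as written, your part (2) does not close.
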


\begin{proof}
(1) It is trivial.

(2) By Example \ref{3.1}(1), $\mathscr{P}(\mathscr{A})\subseteq \mathcal{G}(\mathscr{C})^{\perp_1}$.
Let $G\in\mathcal{G}(\mathscr{C})$ and
$$0\ra L\ra M\ra N\ra 0$$
be an exact sequence in $\mathscr{A}$ with $M,N\in\mathcal{G}(\mathscr{C})^{\perp_1}$. By Proposition \ref{2.7}(1), we have
$\Ext^{\geq 1}_{\mathscr{A}}(G,M)=0=\Ext^{\geq 1}_\mathscr{A}(G,N)$. Then $\Ext^{\geq 2}_\mathscr{A}(G,L)=0$.
Because $G\in\mathcal{G}(\mathscr{C})$, we have an exact sequence
$$0\ra G\ra C^0\ra G^1\ra 0$$ in $\mathscr{A}$ with $C^0\in\mathscr{C}$ and $G^1\in\mathcal{G}(\mathscr{C})$. For $C^0$,
there exists an exact sequence
$$0\ra C^{-1}\ra P^0\ra C^0\ra 0$$ in $\mathscr{A}$ with $P^0\in \mathscr{P}(\mathscr{A})$ and $C^{-1}\in \mathscr{C}$.
Consider the following pullback diagram
$$
\xymatrix@R=20pt@C=20pt{ & 0 \ar@{-->}[d] & 0 \ar[d]& &\\
 & C^{-1}\ar@{==}[r] \ar@{-->}[d] &C^{-1} \ar[d]& &\\
0 \ar@{-->}[r] & {G}^0 \ar@{-->}[d] \ar@{-->}[r] & P^0 \ar[d] \ar@{-->}[r] &G^1 \ar@{==}[d] \ar@{-->}[r] & 0\\
0 \ar[r] & G \ar[r] \ar[d] & C^0 \ar[r] \ar[d] & G^1 \ar[r] & 0 &\\
 & 0 & 0. & & }
$$
By the above argument, we have $\Ext^1_{\mathscr{A}}({G}^0,L)\cong \Ext^2_{\mathscr{A}}(G^1,L)=0$. Because
the leftmost column splits by Proposition \ref{2.7}(2), $G$ is isomorphic to a direct summand of $G^0$
and $\Ext^1_{\mathscr{A}}({G},L)=0$, which shows that $L\in \mathcal{G}(\mathscr{C})^{\perp_1}$.


(3)  It is trivial that  $\mathscr{I}(\mathscr{A})\subseteq\mathcal{G}(\mathscr{C})^{\perp_1}$.
By Proposition \ref{2.7}, we have that $\mathcal{G}(\mathscr{C})^{\perp_1}$ is closed under
cokernels of monomorphisms. Thus $\mathcal{G}(\mathscr{C})^{\perp_1}$ is  injectively coresolving.
\end{proof}

Before giving some applications of Theorem \ref{3.3}(2), consider the following example.

\begin{example}\label{3.4}
{\rm Let $Q$ be a quiver:
$$\xymatrix@R=20pt@C=20pt{
& 1\ar[ld]_{a_1}&\\
2\ar[rr]^{a_2}&&3\ar[lu]_{a_3}
}
$$
and $I=<a_1a_3a_2,a_2a_1a_3>$. Let $R=kQ/I$ with $k$ a field. Then the Auslander-Reiten quiver
$\Gamma(\mod R)$ of $\mod R$ is as follows.
$$
\Gamma(\mod R): \ \ \
\xymatrix@R=5pt@C=10pt{&&&*+[o][F]{\tiny\begin{array}{c}
      {1} \\ {2} \\ {3} \\ {1}
\end{array}}\ar[dr]&&&\\
*+[o][F]{\tiny\begin{array}{c}
      3 \\ 1 \\ 2
\end{array}}\ar[dr]\ar@{.}[dd]&&*+[o][F]{\tiny\begin{array}{c}
      2 \\ 3 \\ 1
\end{array}}\ar[dr]\ar[ur]&&{\tiny\begin{array}{c}  1 \\ 2
\\ 3  \end{array}}\ar[dr]&&*+[o][F]{\tiny\begin{array}{c}
      3 \\ 1 \\ 2
\end{array}}\ar@{.}[dd]\\
&{\tiny\begin{array}{c} 3 \\ 1\end{array}}\ar[ur]\ar[dr]&&{\tiny\begin{array}{c}  2 \\
3 \end{array}}\ar[ur]\ar[dr]&&{\tiny\begin{array}{c}    1 \\ 2 \ar[ur]\ar[dr] \end{array}}&\\
{\tiny \begin{array}{c}1\end{array}}\ar[ur]&&{\tiny \begin{array}{c}3\end{array}}
\ar[ur]&&{\tiny \begin{array}{c}2\end{array}}\ar[ur]&&{\tiny \begin{array}{c}1.\end{array}}
}
$$
By a direct computation, we have
$$
\mathcal{G}(\mathscr{P}(\mod R)):\ \
\xymatrix@R=5pt@C=10pt{&&*+[o][F]{\tiny\begin{array}{c}
      {1} \\ {2} \\ {3} \\ {1}
\end{array}}\ar[dr]&&\\
&*+[o][F]{\tiny\begin{array}{c}
      2 \\ 3 \\ 1
\end{array}}\ar[dr]\ar[ur]&&*+[o][F]{\tiny\begin{array}{c}
      3 \\ 1 \\ 2
\end{array}}\ar[dr]\\
{\tiny\begin{array}{c} 3 \\ 1\end{array}}\ar[ur]\ar@{.}[uu]&&
{\tiny \begin{array}{c}2\end{array}}
\ar[ur]&&{\tiny\begin{array}{c} 3\  \\ 1,\ar@{.}[uu]\end{array}}}
\ \ \ \
\mathcal{G}(\mathscr{P}(\mod R))^{\bot_1}: \ \ \xymatrix@R=5pt@C=10pt{
*+[o][F]{\tiny\begin{array}{c}
      3 \\ 1 \\ 2
\end{array}}\ar[dr]&& *+[o][F]{\tiny\begin{array}{c}
      {1} \\ {2} \\ {3} \\ {1}
\end{array}}\ar[dr]&&*+[o][F]{\tiny\begin{array}{c}
      3 \\ 1 \\ 2
\end{array}}\\
&*+[o][F]{\tiny\begin{array}{c}
      2 \\ 3 \\ 1
\end{array}}\ar[dr]\ar[ur]&&{\tiny\begin{array}{c}  1 \\ 2
\\ 3  \end{array}}\ar[dr]\ar[ur]&\\
{\tiny \begin{array}{c}1\ar@{.}[uu]\end{array}}\ar[ur]&&{\tiny\begin{array}{c} 2 \\
3 \end{array}}\ar[ur]&&{\tiny \begin{array}{c}1,\ar@{.}[uu]\end{array}}
}
$$
where the terms marked by  circles are indecomposable projective modules in $\mod R$.
Then we have $\mathcal{G}(\mathscr{P}(\mod R))\cap\mathcal{G}(\mathscr{P}(\mod R))^{\bot_1}=\mathscr{P}(\mod R)$.}
\end{example}

In general, we have the following

\begin{corollary}\label{3.5}
If $\mathscr{C}$ is closed under direct summands, then for any $n\geq 0$, we have
$$\mathcal{G}(\mathscr{C})^{\leq n}\cap\mathcal{G}(\mathscr{C})^{\perp_1}=\mathscr{C}^{\leq n}.$$
\end{corollary}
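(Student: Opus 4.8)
The plan is to prove the equality $\mathcal{G}(\mathscr{C})^{\leq n}\cap\mathcal{G}(\mathscr{C})^{\perp_1}=\mathscr{C}^{\leq n}$ by a double inclusion, where the inclusion ``$\supseteq$'' is the easy half and ``$\subseteq$'' is where the real work lies. For ``$\supseteq$'', I would argue that $\mathscr{C}\subseteq \mathcal{G}(\mathscr{C})$ (every object of $\mathscr{C}$ has a trivial complete $\mathscr{C}$-resolution since $\mathscr{C}\perp\mathscr{C}$), so $\mathscr{C}^{\leq n}\subseteq\mathcal{G}(\mathscr{C})^{\leq n}$; and $\mathscr{C}^{\leq n}\subseteq\mathscr{C}^{<\infty}\subseteq\mathcal{G}(\mathscr{C})^{\perp_1}$ by Example \ref{3.1}(1). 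For ``$\subseteq$'', I would take $M\in\mathcal{G}(\mathscr{C})^{\leq n}\cap\mathcal{G}(\mathscr{C})^{\perp_1}$ and show $\mathscr{C}\text{-}\dim M\leq n$ by induction on $n$.

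For the induction, the base case $n=0$ says $\mathcal{G}(\mathscr{C})\cap\mathcal{G}(\mathscr{C})^{\perp_1}=\mathscr{C}$. Given $M\in\mathcal{G}(\mathscr{C})$ with $M\in\mathcal{G}(\mathscr{C})^{\perp_1}$, I would use the defining short exact sequence $0\to M\to C^0\to G^1\to 0$ with $C^0\in\mathscr{C}$ and $G^1\in\mathcal{G}(\mathscr{C})$ coming from a complete $\mathscr{C}$-resolution of $M$; since this sequence is $\Hom_{\mathscr{A}}(-,\mathscr{C})$-exact and more to the point, since $M\in\mathcal{G}(\mathscr{C})^{\perp_1}=\mathcal{G}(\mathscr{C})^{\perp}$ (Proposition \ref{2.7}(1)) and $G^1\in\mathcal{G}(\mathscr{C})$, we get $\Ext^1_{\mathscr{A}}(G^1,M)=0$, so the sequence splits. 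Hence $M$ is a direct summand of $C^0\in\mathscr{C}$, and since $\mathscr{C}$ is closed under direct summands by hypothesis, $M\in\mathscr{C}$. For the inductive step with $n\geq 1$: given $M$ with $\mathcal{G}(\mathscr{C})\text{-}\dim M\leq n$, pick an exact sequence $0\to G_n\to G_{n-1}\to\cdots\to G_0\to M\to 0$ with all $G_i\in\mathcal{G}(\mathscr{C})$; splitting off $K=\Im(G_1\to G_0)$, we have $0\to K\to G_0\to M\to 0$ with $\mathcal{G}(\mathscr{C})\text{-}\dim K\leq n-1$. I then want to replace $G_0$ by an object of $\mathscr{C}$: using Lemma \ref{2.6}-style arguments (or the generator property of $\mathscr{C}$ inside $\mathcal{G}(\mathscr{C})$ — actually the cogenerator version), write $0\to G_0\to C^0\to G'\to 0$ with $C^0\in\mathscr{C}$, $G'\in\mathcal{G}(\mathscr{C})$, and form the pushout to obtain $0\to K'\to C^0\to M\to 0$ together with $0\to G_0\to K'\to G'\to\cdots$ — here I need $\mathcal{G}(\mathscr{C})\text{-}\dim K'\leq n-1$ as well, which follows since $K'$ is an extension of $G'\in\mathcal{G}(\mathscr{C})$ by $K$. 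Actually it is cleaner to run the dimension shift directly: since $M\in\mathcal{G}(\mathscr{C})^{\perp_1}$ which is injectively coresolving (Theorem \ref{3.3}(3)) hence closed under cokernels of monomorphisms, and since projectives/$\mathscr{C}$ lie in $\mathcal{G}(\mathscr{C})^{\perp_1}$, I can engineer a sequence $0\to M\to C^0\to M'\to 0$ with $C^0\in\mathscr{C}$ (using that $\mathscr{C}$ is a cogenerator — wait, this needs justification) and then $M'\in\mathcal{G}(\mathscr{C})^{\perp_1}$ and $\mathcal{G}(\mathscr{C})\text{-}\dim M'\leq n-1$, so by induction $\mathscr{C}\text{-}\dim M'\leq n-1$, giving $\mathscr{C}\text{-}\dim M\leq n$.

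The main obstacle I anticipate is the reduction step in the inductive case: producing, for $M\in\mathcal{G}(\mathscr{C})^{\perp_1}$ with finite $\mathcal{G}(\mathscr{C})$-dimension, a short exact sequence $0\to M\to C\to M'\to 0$ with $C\in\mathscr{C}$ and $M'$ again of smaller $\mathcal{G}(\mathscr{C})$-dimension and still in $\mathcal{G}(\mathscr{C})^{\perp_1}$. The $\mathcal{G}(\mathscr{C})^{\perp_1}$-membership of $M'$ is automatic once $C\in\mathscr{C}\subseteq\mathcal{G}(\mathscr{C})^{\perp_1}$ because $\mathcal{G}(\mathscr{C})^{\perp_1}$ is injectively coresolving (Theorem \ref{3.3}(3)), so closed under cokernels of monos. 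The genuinely delicate point is getting the embedding of $M$ into an object of $\mathscr{C}$ compatible with a drop in $\mathcal{G}(\mathscr{C})$-dimension; I expect to handle this by taking the standard $\mathcal{G}(\mathscr{C})$-resolution of $M$, lifting its first term into $\mathscr{C}$ via a short exact sequence $0\to K\to C\to G_0\to 0$ with $C\in\mathscr{C}$ (here one needs either that $\mathscr{P}(\mathscr{A})$-generator hypothesis or the explicit construction from Lemma \ref{2.6}), pulling back along $G_0\twoheadrightarrow M$, and chasing the resulting commutative diagram — the horseshoe/pullback bookkeeping being routine but the place where care is required. Everything else (the splitting in the base case, the $\Ext$ dimension shifts, the closure properties) is formal given Proposition \ref{2.7} and Theorem \ref{3.3}.
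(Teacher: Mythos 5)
Your ``$\supseteq$'' inclusion and your base case $n=0$ are correct and coincide with the paper's argument: for $K\in\mathcal{G}(\mathscr{C})\cap\mathcal{G}(\mathscr{C})^{\perp_1}$ the sequence $0\to K\to C^0\to G^1\to 0$ splits because $\Ext^1_{\mathscr{A}}(G^1,K)=0$, and closure of $\mathscr{C}$ under direct summands finishes it. The gap is in your inductive step, in both of the routes you sketch. First, the pushout you describe does not produce what you claim: from $0\to K\to G_0\to M\to 0$ and $0\to G_0\to C^0\to G'\to 0$ the only sequence you get with $C^0$ in the middle is $0\to K\to C^0\to C^0/K\to 0$ together with $0\to M\to C^0/K\to G'\to 0$; you do \emph{not} get an epimorphism $C^0\thra M$. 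To put an object of $\mathscr{C}$ as the degree-$0$ term of a resolution of $M$ you need the \emph{generator} direction (an epimorphism $C_0\thra G_0$ coming from the complete $\mathscr{C}$-resolution of $G_0$), not the cogenerator direction you chose. Second, your ``cleaner'' alternative $0\to M\to C^0\to M'\to 0$ is unsupported at three points: nothing in the hypotheses lets you embed an arbitrary such $M$ into an object of $\mathscr{C}$ (you flag this yourself and never resolve it); even granting the embedding, there is no reason that $\mathcal{G}(\mathscr{C})\text{-}\dim M'\leq n-1$ (passing to a cokernel along a coresolution does not decrease resolution dimension); and $\mathscr{C}\text{-}\dim M'\leq n-1$ does not splice back to $\mathscr{C}\text{-}\dim M\leq n$, because the sequence points the wrong way for building a left $\mathscr{C}$-resolution of $M$ and the usual dimension-shift inequalities for $\pd$ are not available for the relative dimension $\mathscr{C}\text{-}\dim$ without further argument.

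The paper avoids all of this by quoting \cite[Theorem 5.8]{Hu}: for $M\in\mathcal{G}(\mathscr{C})^{\leq n}$ there is an exact sequence $0\to K_n\to C_{n-1}\to\cdots\to C_0\to M\to 0$ with all $C_i\in\mathscr{C}$ and $K_n\in\mathcal{G}(\mathscr{C})$; then Theorem \ref{3.3}(2) (closure of $\mathcal{G}(\mathscr{C})^{\perp_1}$ under kernels of epimorphisms, applied $n$ times, using $C_i\in\mathcal{G}(\mathscr{C})^{\perp_1}$ by Example \ref{3.1}(1)) gives $K_n\in\mathcal{G}(\mathscr{C})\cap\mathcal{G}(\mathscr{C})^{\perp_1}$, and your base-case splitting argument yields $K_n\in\mathscr{C}$, hence $M\in\mathscr{C}^{\leq n}$. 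If you insist on a self-contained induction, the correct iteration is: from $0\to K\to G_0\to M\to 0$ take $0\to G_0''\to C_0\to G_0\to 0$ with $C_0\in\mathscr{C}$, $G_0''\in\mathcal{G}(\mathscr{C})$, and compose to get $C_0\thra M$ with kernel $K_0'$ sitting in $0\to G_0''\to K_0'\to K\to 0$; you then still owe a proof that $\mathcal{G}(\mathscr{C})\text{-}\dim K_0'\leq n-1$ (an extension statement for the relative dimension), which is exactly the content hidden in the cited theorem.
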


\begin{proof}
By Example \ref{3.1}(1), we have $\mathscr{C}^{\leq n}\subseteq\mathcal{G}(\mathscr{C})^{\leq n}\cap\mathcal{G}(\mathscr{C})^{\perp_1}$.

Now let $M\in \mathcal{G}(\mathscr{C})^{\leq n}\cap\mathcal{G}(\mathscr{C})^{\perp_1}$. By \cite[Theorem 5.8]{Hu},
there exists an exact sequence
$$0\ra K_n\ra C_{n-1}\ra \cdots\ra C_0\ra M\ra 0$$ in $\mathscr{A}$ with all $C_i$ in $\mathscr{C}$ and $K_n\in\mathcal{G}(\mathscr{C})$.
By Theorem \ref{3.3}(2), we have $K_n\in\mathcal{G}(\mathscr{C})^{\perp_1}$. Because $\mathscr{C}$ is closed under direct summands
by assumption,
it follows easily from the definition of $\mathcal{G}(\mathscr{C})$ that $K_n\in \mathscr{C}$ and $M\in \mathscr{C}^{\leq n}$.
\end{proof}

\begin{proposition}\label{3.6}
For any $M\in \mathscr{A}$, the following statements are equivalent.
\begin{enumerate}
\item[(1)] $M\in \mathcal{G}(\mathscr{C})^{\perp_1}$.
\item[(2)]  The functor $\Hom_\mathscr{A}(-,M)$ is exact with respect to any short exact sequence in $\mathscr{A}$
ending with an object in $\mathcal{G}(\mathscr{C})$.
\item[(3)] Every short exact sequence starting with $M$ is $\Hom_\mathscr{A}(\mathcal{G}(\mathscr{C}),-)$-exact.
\end{enumerate}
If, moreover, $R$ is a commutative ring, $\mathscr{A}=\Mod R$ and $\mathscr{C}=\mathscr{P}(\Mod R)$,
then the above conditions are equivalent to the following
\begin{enumerate}
\item[(4)] $\Hom_R(Q,M)\in\mathcal{G}(\mathscr{P}(\Mod R))^{\perp_1}$ for any  $Q\in \mathscr{P}(\Mod R)$.
\end{enumerate}
\end{proposition}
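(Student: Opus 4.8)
The plan is to reduce all three (resp.\ four) conditions to the single statement that $\Ext^1_\mathscr{A}(G,M)=0$ for every $G\in\mathcal{G}(\mathscr{C})$, which is literally condition~(1), and to shuttle between that statement and (2)--(3) using the long exact sequence in $\Ext$ together with the classical fact that, in any abelian category, $\Ext^1_\mathscr{A}(G,M)=0$ is equivalent to every short exact sequence $0\to M\to E\to G\to 0$ being split. First I would handle (1)$\Leftrightarrow$(2): given a short exact sequence $0\to A\to B\to G\to 0$ with $G\in\mathcal{G}(\mathscr{C})$, applying $\Hom_\mathscr{A}(-,M)$ yields the exact sequence $0\to\Hom_\mathscr{A}(G,M)\to\Hom_\mathscr{A}(B,M)\to\Hom_\mathscr{A}(A,M)\to\Ext^1_\mathscr{A}(G,M)$, so ``$\Hom_\mathscr{A}(-,M)$ is exact with respect to it'' just says the connecting map into $\Ext^1_\mathscr{A}(G,M)$ vanishes. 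If (1) holds this target group is zero, giving (2). Conversely, if (2) holds, pick any $\xi\in\Ext^1_\mathscr{A}(G,M)$, represent it by $0\to M\to E\to G\to 0$ (a sequence ending in $G\in\mathcal{G}(\mathscr{C})$), and apply (2) to it: $\Hom_\mathscr{A}(E,M)\to\Hom_\mathscr{A}(M,M)$ is epic, so $\id_M$ lifts to a retraction $E\to M$, the extension splits, and $\xi=0$; hence $\Ext^1_\mathscr{A}(G,M)=0$ for all such $G$, i.e.\ (1).

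The equivalence (1)$\Leftrightarrow$(3) is the same argument run in the other variable. For a short exact sequence $0\to M\to X\to Y\to 0$ and $G\in\mathcal{G}(\mathscr{C})$ one has the exact sequence $\Hom_\mathscr{A}(G,X)\to\Hom_\mathscr{A}(G,Y)\to\Ext^1_\mathscr{A}(G,M)$, so (1) immediately forces $\Hom_\mathscr{A}(G,-)$-exactness, giving (3). For (3)$\Rightarrow$(1), represent an arbitrary $\xi\in\Ext^1_\mathscr{A}(G,M)$ by $0\to M\to E\to G\to 0$; since this sequence starts with $M$, (3) makes it $\Hom_\mathscr{A}(\mathcal{G}(\mathscr{C}),-)$-exact, and testing against $G$ itself shows $\id_G$ lifts along $E\to G$, so the extension splits and $\xi=0$. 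Thus both ``hard'' directions reduce to the observation that $\Ext^1_\mathscr{A}(G,M)=0$ iff every extension of $G$ by $M$ splits.

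For the ``moreover'' part I would prove (1)$\Leftrightarrow$(4) via closure properties rather than any Hom--tensor manipulation. If $Q\in\mathscr{P}(\Mod R)$, there is a module $P$ with $Q\oplus P\cong R^{(I)}$ for some index set $I$; applying $\Hom_R(-,M)$ and using commutativity of $R$ to stay inside $\Mod R$ gives $\Hom_R(Q,M)\oplus\Hom_R(P,M)\cong\Hom_R(R^{(I)},M)\cong\prod_{I}M$. By Theorem~\ref{3.3}(1) the category $\mathcal{G}(\mathscr{P}(\Mod R))^{\perp_1}$ is closed under direct products and direct summands, so if $M\in\mathcal{G}(\mathscr{P}(\Mod R))^{\perp_1}$ then so is $\Hom_R(Q,M)$, which is (4). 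The converse is immediate: take $Q=R$ and use $\Hom_R(R,M)\cong M$. I expect no genuine obstacle here; the only point demanding care (as opposed to bookkeeping) is keeping straight which variable of $\Ext$ is in play in (2) versus (3) and verifying that a given $\Ext^1$-class can be realized by an extension with the prescribed end terms, the remaining structural input being supplied by Theorem~\ref{3.3} and Proposition~\ref{2.7}.
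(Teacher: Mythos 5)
Your proof is correct. The equivalences (1)$\Leftrightarrow$(2)$\Leftrightarrow$(3) are handled exactly as the paper intends (it simply declares them easy): the long exact sequence in $\Ext$ plus the characterization of the vanishing of $\Ext^1$ by the splitting of all representing extensions, with the lifting of $\id_M$ (resp.\ $\id_G$) giving the splitting in the two converse directions. Where you genuinely diverge is in (1)$\Rightarrow$(4). The paper fixes $G\in\mathcal{G}(\mathscr{P}(\Mod R))$, takes a projective presentation $0\to K\to P\to G\to 0$, applies $Q\otimes_R-$, checks that $Q\otimes_RG$ is again Gorenstein projective, and then transports the vanishing of $\Ext^1_R(Q\otimes_RG,M)$ across the adjunction $\Hom_R(Q\otimes_R-,M)\cong\Hom_R(-,\Hom_R(Q,M))$ to get $\Ext^1_R(G,\Hom_R(Q,M))=0$. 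You instead realize $Q$ as a direct summand of a free module $R^{(I)}$, identify $\Hom_R(R^{(I)},M)$ with $\prod_IM$, and invoke Theorem \ref{3.3}(1) (closure of $\mathcal{G}(\mathscr{C})^{\perp_1}$ under direct products and direct summands). Your route is more economical: it avoids the adjunction bookkeeping and the claim (asserted but not proved in the paper, though standard) that $Q\otimes_RG$ is Gorenstein projective; the price is reliance on closure under arbitrary products, which is harmless here since $\mathscr{A}=\Mod R$. The step (4)$\Rightarrow$(1) via $Q=R$ is identical in both treatments.
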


\begin{proof}
$(1)\Leftrightarrow (2) \Leftrightarrow (3)$  It is easy.

Now let $R$ be a commutative ring.

$(1)\Rightarrow (4)$ For any $G\in \mathcal{G}(\mathscr{P}(\Mod R))$, we have an exact sequence
$$0\ra K\buildrel {f} \over \ra P\ra G\ra 0\eqno{(3.1)}$$ in $\Mod R$ with $P\in\mathscr{P}(\Mod R)$.
Let $Q\in\mathscr{P}(\Mod R)$. Then
$$0\ra Q\otimes_RK\buildrel {1_Q\otimes f} \over\longrightarrow Q\otimes_RP\ra Q\otimes_RG\ra 0$$ is exact.
It is easy to check that $Q\otimes_R G\in\mathcal{G}(\mathscr{P}(\Mod R))$. Then
$\Ext^1_R(Q\otimes_RG, M)=0$ by (1), and so $\Hom_R(1_Q\otimes f, M)$ is epic.
By the adjoint isomorphism, we have that $\Hom_R(f,\Hom_R(Q,M))$ is also epic. So
applying the functor $\Hom_R(-,\Hom_R(Q,M))$ to (3.1) we get $\Ext^1_R(G,\Hom_R(Q,M))=0$,
and hence $\Hom_R(Q,M)\in \mathcal{G}(\mathscr{P}(\Mod R))^{\perp_1}$.

$(4)\Rightarrow (1)$ It is trivial by setting $Q=R$.
\end{proof}

In the following result, we characterize categories over which all objects are in $\mathcal{G}(\mathscr{C})^{\perp_1}$.

\begin{proposition}\label{3.7} Assume that $\mathscr{C}$ is closed under direct summands.
Consider the following conditions.
\begin{enumerate}
\item[(1)] $\mathcal{G}(\mathscr{C})^{\perp_1}=\mathscr{A}$.
\item[(2)] $\mathcal{G}(\mathscr{C})\subseteq\mathcal{G}(\mathscr{C})^{\perp_1}$.
\item[(3)] $\mathcal{G}(\mathscr{C})=\mathscr{C}$.
\end{enumerate}
Then we have $(1)\Rightarrow (2)\Rightarrow (3)$.
If $\mathscr{C}$ is a projective generator for $\mathscr{A}$, then all of them are equivalent.
\end{proposition}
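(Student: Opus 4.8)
The plan is to establish the chain $(1)\Rightarrow(2)\Rightarrow(3)$ first, using only that $\mathscr{C}$ is closed under direct summands, and then to add $(3)\Rightarrow(1)$ under the extra hypothesis that $\mathscr{C}$ is a projective generator for $\mathscr{A}$, thereby closing the loop.

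The implication $(1)\Rightarrow(2)$ is immediate, since $\mathcal{G}(\mathscr{C})\subseteq\mathscr{A}=\mathcal{G}(\mathscr{C})^{\perp_1}$. For $(2)\Rightarrow(3)$ I would argue as follows. One always has $\mathscr{C}\subseteq\mathcal{G}(\mathscr{C})$ (for $C\in\mathscr{C}$ the complex $\cdots\to 0\to C\xrightarrow{1_C}C\to 0\to\cdots$ is a complete $\mathscr{C}$-resolution of $C$), so it remains to prove $\mathcal{G}(\mathscr{C})\subseteq\mathscr{C}$. Given $G\in\mathcal{G}(\mathscr{C})$, extract from a complete $\mathscr{C}$-resolution a short exact sequence $0\to G\to C^0\to G^1\to 0$ with $C^0\in\mathscr{C}$ and $G^1\in\mathcal{G}(\mathscr{C})$ (the cosyzygy $G^1=\Im(C^0\to C^1)$ is again in $\mathcal{G}(\mathscr{C})$ by shifting the resolution, exactly as in the proof of Theorem \ref{3.3}(2)). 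Hypothesis $(2)$ gives $G\in\mathcal{G}(\mathscr{C})^{\perp_1}$, and since $G^1\in\mathcal{G}(\mathscr{C})$ this yields $\Ext^1_{\mathscr{A}}(G^1,G)=0$; hence the sequence splits and $G$ is a direct summand of $C^0\in\mathscr{C}$. As $\mathscr{C}$ is closed under direct summands, $G\in\mathscr{C}$, and therefore $\mathcal{G}(\mathscr{C})=\mathscr{C}$.

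For the remaining implication, assume $\mathscr{C}$ is a projective generator for $\mathscr{A}$ and that $(3)$ holds. Then $\mathcal{G}(\mathscr{C})^{\perp_1}=\mathscr{C}^{\perp_1}$, and the condition $\mathscr{C}\perp\mathscr{A}$ contained in Definition \ref{2.5} forces $\Ext^1_{\mathscr{A}}(C,M)=0$ for all $C\in\mathscr{C}$ and $M\in\mathscr{A}$, i.e. $\mathscr{C}^{\perp_1}=\mathscr{A}$. This gives $(3)\Rightarrow(1)$, so all three conditions become equivalent. I do not expect a genuine obstacle here: the proposition is essentially an assembly of the definition of $\mathcal{G}(\mathscr{C})$ via complete resolutions (in particular cosyzygy stability and $\mathscr{C}\subseteq\mathcal{G}(\mathscr{C})$), the splitting trick, and the $\perp$-half of "projective generator". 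The one point to watch in $(2)\Rightarrow(3)$ is that the short exact sequence realizing $G$ as a sub-object of an object of $\mathscr{C}$ must have its quotient term back inside $\mathcal{G}(\mathscr{C})$ so that hypothesis $(2)$ can be applied to $\Ext^1_{\mathscr{A}}(G^1,G)$ — but this is precisely the resolution machinery already set up in the paper, and one may note in passing that the generator half of the hypothesis is not actually used in any of the three implications.
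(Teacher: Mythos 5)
Your proof is correct and follows essentially the same route as the paper's: extract a short exact sequence from a complete $\mathscr{C}$-resolution, use hypothesis (2) to make it split, and conclude via closure under direct summands; the only cosmetic difference is that you split the cosyzygy sequence $0\to G\to C^0\to G^1\to 0$ using $\Ext^1_{\mathscr{A}}(G^1,G)=0$, whereas the paper splits the syzygy sequence $0\to G_1\to C_0\to G\to 0$ using $\Ext^1_{\mathscr{A}}(G,G_1)=0$. Your handling of $(3)\Rightarrow(1)$ via $\mathscr{C}\perp\mathscr{A}$ is exactly what the paper leaves as ``follows directly,'' and your side remark that the generator half of that hypothesis is not needed there is accurate.
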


\begin{proof}
The implication $(1)\Rightarrow (2)$ is trivial.

$(2)\Rightarrow (3)$ Let $G\in \mathcal{G}(\mathscr{C})$. Then there exists an exact sequence
$$0\ra G_1\ra C_0\ra G\ra 0$$ in $\mathscr{A}$ with $C_0\in\mathscr{C}$ and $G_1\in\mathcal{G}(\mathscr{C})$.
By (2), we have that $G_1\in\mathcal{G}(\mathscr{C})^{\perp_1}$ and the above exact sequence splits.
Thus as a direct summand of $C_0$, $G\in \mathscr{C}$ by assumption.

If $\mathscr{C}$ is a projective generator for $\mathscr{A}$, then the implication $(3)\Rightarrow (1)$ follows directly.
\end{proof}

Let $\mathscr{X}$ be a subcategory of $\mod R$ containing $\mathscr{P}(\mod R)$. We use $\underline{\mathscr{X}}$ to
denote the stable category of $\mathscr{X}$ modulo $\mathscr{P}(\mod R)$.
We end this section by giving two examples about $\mathcal{G}(\mathscr{P}(\mod R))^{\perp_1}$.

\begin{example}\label{3.8}
{ Let $Q_1$ and $Q_2$ be the following two quivers
$$
Q_1: \ \xymatrix@R=8pt@C=8pt{1\ar@/^/[rr]^{\alpha_1}&&2\ar@/^/[ll]^{\alpha_2}\ar[rr]^{\alpha_3}&&3\ar@(dr,ur)[]^{\alpha_4}}
\ \ \ \ \ \ \ \ \ \
Q_2: \  \xymatrix@R=8pt@C=8pt{a\ar@/^/[rr]^{\alpha_a}&&b\ar@/^/[ll]^{\alpha_b}\ar[rr]^{\alpha_c}&&c&&d,\ar[ll]_{\alpha_d}}
$$
and let $I_1=<\alpha_2\alpha_1,\alpha_1\alpha_2,\alpha_4\alpha_3,\alpha_4^2>$ and
$I_2=<\alpha_b\alpha_a,\alpha_a\alpha_b>$. Let $R_1=KQ_1/I_1$ and $R_2=KQ_2/I_2$.
Note that $R_2$ is Gorenstein and $R_1$ is not Gorenstein.
The Auslander-Reiten quivers of $\mod R_1$ and $\mod R_2$ are as follows.
$$\Gamma(\operatorname{mod} R_1):
\xymatrix@R=5pt@C=10pt{
&*+[o][F]{\tiny\begin{array}{c}
      {3} \\ {3}
\end{array}}\ar[rd]&&{\tiny\begin{tabular}{|c|}
  \hline
      {2} \\ {1} \\
  \hline
\end{tabular}}\ar[rd]&&&\\
{\tiny\begin{tabular}{|c|}
  \hline
      {3} \\
  \hline
\end{tabular}}\ar[ur]\ar[dr]&&
{\tiny\begin{tabular}{|c|}
  \hline
   \ \ \  \ {2} \ 3 \\ 1 \ {3} \\
  \hline
\end{tabular}}\ar[ur]\ar[dr]&&
{\tiny\begin{array}{c}  2 \end{array}}\ar[dr]&&
{\tiny\begin{array}{c}  1 \end{array}}\\
&*+[o][F]{\tiny\begin{array}{c}
     \ {2}  \\ 1 \ {3} 
\end{array}}\ar[ur]\ar[dr]&&
{\tiny\begin{array}{c}
      {2} \ \ 3 \\  {3}
\end{array}}\ar[ur]\ar[dr]&&
{\tiny\begin{tabular}{|c|}
  \hline
      {1} \\ {2} \\
  \hline
\end{tabular}}\ar[ur]&\\
{\tiny\begin{array}{c}  1 \end{array}}\ar[ur]&&
{\tiny\begin{array}{c}
      {2} \\  {3}
\end{array}}\ar[ur]\ar[dr]&&
{\tiny\begin{tabular}{|c|}
  \hline
  1 \ \ \  \\
      {2} \ 3  \\  \ {3} \\
  \hline
\end{tabular}}\ar[ur]\ar[dr]&&\\
&&&*+[o][F]{\tiny\begin{array}{c}
     1\\ {2} \\ {3} 
\end{array}}\ar[ur]&&
{\tiny\begin{tabular}{|c|}
  \hline
      {3} \\
  \hline
\end{tabular},}
}
$$

$$\Gamma(\operatorname{mod} R_2):
\xymatrix@R=5pt@C=10pt{
&*+[o][F]{\tiny\begin{array}{c}
      {d} \\ {c} 
\end{array}}\ar[rd]&&{\tiny\begin{tabular}{|c|}
  \hline
      {b} \\ {a} \\
  \hline
\end{tabular}}\ar[rd]&&&\\
*+[o][F]{\tiny\begin{array}{c}
      {c} 
\end{array}}\ar[ur]\ar[dr]&&
{\tiny\begin{tabular}{|c|}
  \hline
   \ \ \  \ {b} \ d \\ a \ {c} \\
  \hline
\end{tabular}}\ar[ur]\ar[dr]&&
{\tiny\begin{array}{c}  b \end{array}}\ar[dr]&&
{\tiny\begin{array}{c}  a \end{array}}\\
&*+[o][F]{\tiny\begin{array}{c}
     \ {b}  \\ a \ {c} 
\end{array}}\ar[ur]\ar[dr]&&
{\tiny\begin{array}{c}
      {b} \ \ d \\  {c}
\end{array}}\ar[ur]\ar[dr]&&
{\tiny\begin{tabular}{|c|}
  \hline
      {a} \\ {b} \\
  \hline
\end{tabular}}\ar[ur]&\\
{\tiny\begin{array}{c}  a \end{array}}\ar[ur]&&
{\tiny\begin{array}{c}
      {b} \\  {c}
\end{array}}\ar[ur]\ar[dr]&&
{\tiny\begin{tabular}{|c|}
  \hline
  a \ \ \  \\
      {b} \ d  \\  \ {c} \\
  \hline
\end{tabular}}\ar[ur]\ar[dr]&&\\
&&&*+[o][F]{\tiny\begin{array}{c}
     a\\ {b} \\ {c} 
\end{array}}\ar[ur]&&
{\tiny\begin{tabular}{|c|}
  \hline
      {d} \\
  \hline
\end{tabular}.}
}
$$
Then we have
\begin{enumerate}
\item[(1)] The objects marked in a cycle or a box are indecomposable objects in $\mathcal{G}(\mathscr{P}(\mod R_i))^{\perp_1}$ ($i=1,2$);
in particular, the objects marked in a cycle are indecomposable objects in $\mathscr{P}(\mod R_i)$ ($i=1,2$).
\item[(2)] $\underline{\mod} R_1\simeq \underline{\mod} R_2$ and
$\frac{\operatorname{mod} R_1}{{\mathcal{G}(\mathscr{P}(\mod R_1))^{\perp_1}}}\simeq \frac{{\operatorname{mod} R_2}}{{\mathcal{G}(\mathscr{P}(\mod R_2))^{\perp_1}}}$.
\item[(3)]
${\mathcal{G}(\mathscr{P}(\mod R_1))^{\perp_1}}\underline{\nsim} \ {\mathcal{G}(\mathscr{P}(\mod R_2))^{\perp_1}}$ and
$\underline{{\mathcal{G}(\mathscr{P}(\mod R_1))^{\perp_1}}}\simeq \ \underline{{\mathcal{G}(\mathscr{P}(\mod R_2))^{\perp_1}}}$.
\end{enumerate}}
\end{example}

\begin{example}\label{3.9}
{ Let $Q_1$ and $Q_2$ be the following two quivers
$$
Q_1: \ \xymatrix@R=10pt@C=10pt{
1\ar[rr]^{\alpha_1}&& 2\ar[rr]^{\alpha_2} && 3\ar[ld]^{\alpha_3}\\
&&&4\ar[lu]^{\alpha_4}&}
\ \ \ \ \ \
Q_2: \ \xymatrix@R=10pt@C=10pt{a && b \ar[ll]_{\alpha_a}\ar[rr]^{\alpha_b} && c,\ar[ld]^{\alpha_c}\\
&&&d\ar[lu]^{\alpha_d}&}
$$
and let $I_1=<\alpha_3\alpha_2,\alpha_4\alpha_3,\alpha_2\alpha_4>$ and
$I_2=<\alpha_c\alpha_b,\alpha_d\alpha_c,\alpha_b\alpha_d>$. Let $R_1=KQ_1/I_1$ and $R_2=KQ_2/I_2$.
Then the Auslander-Reiten quivers of $\mod R_1$ and $\mod R_2$ are as follows.
$$
\Gamma(\operatorname{mod} R_1): \  \xymatrix@R=5pt@C=10pt{
&&*+[o][F]{\tiny\begin{array}{c}
      {4} \\ {2} 
\end{array}}\ar[rd]&&{\tiny\begin{tabular}{|c|}
  \hline
      {1} \\
  \hline
\end{tabular}}&&&&\\
&{\tiny\begin{array}{c}  2 \end{array}}\ar[ru]\ar[rd]&&{\tiny\begin{tabular}{|c|}
  \hline
      {1} \ 4 \\ {2} \\
  \hline
\end{tabular}}\ar[ru]\ar[rd]&&&&*+[o][F]{\tiny\begin{array}{c}
       {2} \\ {3} 
\end{array}}\ar[rd]&\\
*+[o][F]{\tiny\begin{array}{c}
       {2} \\ {3} 
\end{array}}\ar[ru]\ar[rd]&&{\tiny\begin{array}{c}    1 \\
2 \end{array}}\ar[ru]&&{\tiny\begin{array}{c}    4 \end{array}}\ar[rd]&&
{\tiny\begin{array}{c} 3 \end{array}}\ar[ru]&&{\tiny\begin{array}{c}  2, \end{array}}\\
&*+[o][F]{\tiny\begin{array}{c}
      {1} \\ {2} \\ {3} 
\end{array}}\ar[ru]&&&&*+[o][F]{\tiny\begin{array}{c}
      {3} \\ {4} 
\end{array}}\ar[ru]&&&
}
$$
$$
\Gamma(\operatorname{mod} R_2): \ \xymatrix@R=5pt@C=10pt{
*+[o][F]{\tiny\begin{array}{c}
      {a} 
\end{array}}\ar[rd]&&{\tiny\begin{tabular}{|c|}
  \hline
      {b} \\ {c} \\
  \hline
\end{tabular}}\ar[rd]&&&&*+[o][F]{\tiny\begin{array}{c}
       {c} \\ {d} 
\end{array}}\ar[rd]&\\
&*+[o][F]{\tiny\begin{array}{c}
       {b} \\ a \ {c} 
\end{array}}\ar[ru]\ar[rd]&&{\tiny\begin{array}{c} b\end{array}}
\ar[rd]&&{\tiny\begin{array}{c} d\end{array}}\ar[ru]&&{\tiny\begin{array}{c}
c,\end{array}}\\
{\tiny\begin{array}{c}  c\end{array}}\ar[ru]&&{\tiny\begin{array}{c}  b\\
a\end{array}}\ar[ru]\ar[rd]&&{\tiny\begin{tabular}{|c|}
  \hline
      {d} \\ {b} \\
  \hline
\end{tabular}}\ar[ru]&&&\\
&&&*+[o][F]{\tiny\begin{array}{c}
      {d} \\ {b} \\  {a} 
\end{array}}\ar[ru]&&&&\\
}
$$
Then we have
\begin{enumerate}
\item[(1)] The objects marked in a cycle or a box are indecomposable objects in $\mathcal{G}(\mathscr{P}(\mod R_i))^{\perp_1}$ ($i=1,2$);
in particular, the objects marked in a cycle are indecomposable objects in $\mathscr{P}(\mod R_i)$ ($i=1,2$).
\item[(2)] $\underline{\mod} R_1 \underline{\nsim} \ {\underline{\mod} R_2}$ and
$\frac{\operatorname{mod} R_1}{{\mathcal{G}(\mathscr{P}(\mod R_1))^{\perp_1}}}\simeq \frac{{\operatorname{mod} R_2}}{{\mathcal{G}(\mathscr{P}(\mod R_2))^{\perp_1}}}$.
\item[(3)]
${\mathcal{G}(\mathscr{P}(\mod R_1))^{\perp_1}}\underline{\nsim} \ {\mathcal{G}(\mathscr{P}(\mod R_2))^{\perp_1}}$ and
$\underline{{\mathcal{G}(\mathscr{P}(\mod R_1))^{\perp_1}}}\simeq \ \underline{{\mathcal{G}(\mathscr{P}(\mod R_2))^{\perp_1}}}$.
\end{enumerate} }
\end{example}

\section{The special precovered category of $\mathcal{G}(\mathscr{C})$}

In this section, we introduce and investigate the special precovered category of $\mathcal{G}(\mathscr{C})$
in terms of the properties of $\mathcal{G}(\mathscr{C})^{\perp_1}$.

\begin{proposition}\label{4.1}
\begin{enumerate}
\item[]
\item[(1)] Let $M\in\mathcal{G}(\mathscr{C})^{\perp_1}$ and $f: C\twoheadrightarrow M$ be an epimorphism in $\mathscr{A}$ with $C\in\mathscr{C}$.
Then $\operatorname{Ker}f\in\mathcal{G}(\mathscr{C})^{\perp_1}$ and $f$ is a special $\mathcal{G}(\mathscr{C})$-precover of $M$.
\item[(2)] 
Consider an exact sequence
$$0\ra M'\ra C\ra M\ra 0.\eqno{(4.1)}$$
If $M'$ admits special $\mathcal{G}(\mathscr{C})$-precover, then so is $M$. The converse is true if $\mathscr{C}$ is a generator
for $\mathcal{G}(\mathscr{C})^{\perp_1}$ and (4.1) is $\Hom_{\mathscr{A}}(\mathscr{C},-)$-exact.
\end{enumerate}
\end{proposition}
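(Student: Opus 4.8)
The plan is to settle (1) at once and to obtain both directions of (2) from short pullback/pushout arguments, using Theorem \ref{3.3}, Proposition \ref{2.7}, the fact that $\mathscr{C}\subseteq\mathcal{G}(\mathscr{C})$, and the closure of $\mathcal{G}(\mathscr{C})$ under extensions (\cite{SSW08,Hu}); the converse in (2) is the substantive part. Throughout I will use the elementary observation that, by Proposition \ref{2.7}(1), any epimorphism $X\twoheadrightarrow Y$ with $X\in\mathcal{G}(\mathscr{C})$ and kernel in $\mathcal{G}(\mathscr{C})^{\perp_1}$ is automatically a special $\mathcal{G}(\mathscr{C})$-precover of $Y$: apply $\Hom_{\mathscr{A}}(G,-)$ to $0\to\Ker\to X\to Y\to 0$ for $G\in\mathcal{G}(\mathscr{C})$ and use $\Ext^{1}_{\mathscr{A}}(G,\Ker)=0$. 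For (1): since $\mathscr{C}\subseteq\mathcal{G}(\mathscr{C})^{\perp_1}$ (Example \ref{3.1}(1)) and $\mathcal{G}(\mathscr{C})^{\perp_1}$ is closed under kernels of epimorphisms (Theorem \ref{3.3}(2)), $\Ker f\in\mathcal{G}(\mathscr{C})^{\perp_1}$; since $C\in\mathscr{C}\subseteq\mathcal{G}(\mathscr{C})$, the observation applies and $f$ is a special $\mathcal{G}(\mathscr{C})$-precover.

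For the first assertion of (2), let $0\to K\to G'\xrightarrow{g} M'\to 0$ be a special $\mathcal{G}(\mathscr{C})$-precover ($G'\in\mathcal{G}(\mathscr{C})$, $K\in\mathcal{G}(\mathscr{C})^{\perp_1}$), and let $0\to G'\xrightarrow{\lambda} C^{0}\to G^{1}\to 0$ be one step of a complete $\mathscr{C}$-resolution of $G'$ ($C^{0}\in\mathscr{C}$, $G^{1}\in\mathcal{G}(\mathscr{C})$). Viewing $K\subseteq G'\subseteq C^{0}$ through $\lambda$ and setting $\overline{C}=C^{0}/K$, the sequence $0\to K\to C^{0}\to\overline{C}\to 0$ shows $\overline{C}\in\mathcal{G}(\mathscr{C})^{\perp_1}$ (Theorem \ref{3.3}(3), as $K,C^{0}\in\mathcal{G}(\mathscr{C})^{\perp_1}$), and $0\to M'\to\overline{C}\to G^{1}\to 0$ is exact because $G'/K\cong M'$ and $C^{0}/G'\cong G^{1}$. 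The pushout $R$ of $M'\hookrightarrow\overline{C}$ and $\iota\colon M'\hookrightarrow C$ then sits in short exact sequences $0\to C\to R\to G^{1}\to 0$ and $0\to\overline{C}\to R\to M\to 0$; from the first, $R\in\mathcal{G}(\mathscr{C})$ (an extension of $G^{1}\in\mathcal{G}(\mathscr{C})$ by $C\in\mathscr{C}\subseteq\mathcal{G}(\mathscr{C})$), so by the observation the second sequence exhibits a special $\mathcal{G}(\mathscr{C})$-precover of $M$.

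For the converse, assume additionally that $\mathscr{C}$ generates $\mathcal{G}(\mathscr{C})^{\perp_1}$ and that (4.1) is $\Hom_{\mathscr{A}}(\mathscr{C},-)$-exact; the latter together with $\mathscr{C}\perp\mathscr{C}$ gives $M'\in\mathscr{C}^{\perp_1}$, since the connecting map forces $\Ext^{1}_{\mathscr{A}}(C',M')\hookrightarrow\Ext^{1}_{\mathscr{A}}(C',C)=0$ for $C'\in\mathscr{C}$. Let $0\to L\to G\xrightarrow{h} M\to 0$ be a special $\mathcal{G}(\mathscr{C})$-precover of $M$ and form the pullback $E=G\times_{M}C$, yielding $0\to M'\xrightarrow{\mu} E\xrightarrow{q} G\to 0$ and $0\to L\to E\to C\to 0$; the second splits since $\Ext^{1}_{\mathscr{A}}(C,L)=0$ ($C\in\mathscr{C}\subseteq\mathcal{G}(\mathscr{C})$, $L\in\mathcal{G}(\mathscr{C})^{\perp_1}=\mathcal{G}(\mathscr{C})^{\perp}$), hence $E\cong L\oplus C\in\mathcal{G}(\mathscr{C})^{\perp_1}$. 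Using the generator hypothesis, pick $0\to Y\to C_{E}\xrightarrow{\sigma} E\to 0$ with $C_{E}\in\mathscr{C}$, $Y\in\mathcal{G}(\mathscr{C})^{\perp_1}$, and pull $\mu$ back along $\sigma$ to get $P$ with $0\to Y\to P\to M'\to 0$ and (the pullback being also a pushout, as $\sigma$ is epic) $0\to P\to C_{E}\xrightarrow{q\sigma} G\to 0$. The key point is that $P\in\mathcal{G}(\mathscr{C})$: the last sequence is $\Hom_{\mathscr{A}}(-,\mathscr{C})$-exact automatically, since $\Ext^{1}_{\mathscr{A}}(G,C')=0$ for $C'\in\mathscr{C}$ (Proposition \ref{2.7}(2)), and it is $\Hom_{\mathscr{A}}(\mathscr{C},-)$-exact because any $C'\to G$ with $C'\in\mathscr{C}$ lifts along $q$ (obstruction in $\Ext^{1}_{\mathscr{A}}(C',M')=0$) and then along $\sigma$ (obstruction in $\Ext^{1}_{\mathscr{A}}(C',Y)=0$); therefore $P\in\mathcal{G}(\mathscr{C})$ by \cite[Proposition 4.7]{Hu}. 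By the observation, $P\twoheadrightarrow M'$ is then a special $\mathcal{G}(\mathscr{C})$-precover of $M'$. The main obstacle is precisely this verification that $0\to P\to C_{E}\to G\to 0$ is $\Hom_{\mathscr{A}}(\mathscr{C},-)$-exact, and it is exactly here that both extra hypotheses are consumed: the generator condition supplies $C_{E}$ and $Y$, and the $\Hom_{\mathscr{A}}(\mathscr{C},-)$-exactness of (4.1) supplies $M'\in\mathscr{C}^{\perp_1}$.
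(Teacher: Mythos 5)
Your proof is correct and follows essentially the same route as the paper's: part (1) via Example \ref{3.1}(1) and Theorem \ref{3.3}(2), the forward half of (2) by splicing the special precover of $M'$ with a cosyzygy sequence of the Gorenstein object and combining with (4.1), and the converse by producing an epimorphism from an object of $\mathscr{C}$ onto (a $\mathscr{C}$-stabilized copy of) the kernel $L$ and checking that the resulting kernel over $G$ lies in $\mathcal{G}(\mathscr{C})$ via \cite[Proposition 4.7]{Hu}. Your pushout/double-pullback packaging (with $E\cong L\oplus C$ playing the role of the paper's middle column $C'\oplus C$) and your explicit isolation of $M'\in\mathscr{C}^{\perp_1}$ as the consequence of the $\Hom_{\mathscr{A}}(\mathscr{C},-)$-exactness of (4.1) are only cosmetic variations on the paper's $3\times 3$ diagrams.
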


\begin{proof}
(1) The assertion follows from Example \ref{3.1}(1) and Theorem \ref{3.3}(2).

(2) Assume that $M'$ admits a special $\mathcal{G}(\mathscr{C})$-precover and
$$0\ra N\ra G\ra M'\ra 0$$ is an exact sequence in $\mathscr{A}$ with $G\in\mathcal{G}(\mathscr{C})$
and $N\in \mathcal{G}(\mathscr{C})^{\perp_1}$.
Combining it with the following $\Hom_{\mathscr{A}}(-,\mathscr{C})$-exact exact sequence
$$0\ra G\buildrel {i} \over\ra C^0\buildrel {p} \over\ra G^1\ra 0$$
in $\mathscr{A}$ with $C^0\in\mathscr{C}$ and $G^1\in\mathcal{G}(\mathscr{C})$,
we get the following commutative diagram with exact columns and rows
$$\xymatrix@R=20pt@C=20pt{
  &0\ar[d]&&&\\
  &N\ar[d]&&&\\
0 \ar[r] & G \ar[d] \ar[r]^i & C^0 \ar@{-->}[d]^g \ar[r]^p &G^1\ar@{-->}[d]^h \ar[r] & 0\\
0 \ar[r] &M' \ar[r] \ar[d]& C \ar[r]  & M \ar[r] & 0 &\\
&0.&&&
}$$
Adding the exact sequence $\xymatrix@C=0.5cm{
0 \ar[r] & 0 \ar[r]^{} & C \ar[r]^{1_C} & C \ar[r] & 0 }$ to the middle row,
we obtain the following commutative diagram with exact columns and rows
$$\xymatrix@R=20pt@C=20pt{
  &0\ar[d]&&&\\
  &N\ar[d]&&&\\
0 \ar[r] & G \ar[d] \ar[r]^{{i\choose 0}} & C^0\oplus C \ar[d]^{(g, 1_C)}
\ar[r]^{{p \ 0\  \choose  \ 0 \ 1_C}} &G^1\oplus C\ar@{-->}[d]^{h'} \ar[r] & 0\\
0 \ar[r] &M' \ar[r] \ar[d]& C \ar[r]\ar[d]  & M \ar[r] & 0 &\\
&0&0,&&
}$$
which can be completed to a commutative diagram with exact columns and rows as follows.
$$\xymatrix@R=20pt@C=20pt{
  &0\ar[d]&0\ar@{-->}[d]&0\ar@{-->}[d]&\\
 0\ar@{-->}[r] &N\ar@{-->}[r]\ar[d]&C'\ar@{-->}[r]\ar@{-->}[d]&M''\ar@{-->}[r]\ar@{-->}[d]&0\\
0 \ar[r] & G \ar[d] \ar[r]^{{i\choose 0}} & C^0\oplus C \ar[d]^{(g,1_C)}
\ar[r]^{{p \ 0\  \choose  \ 0 \ 1_C}} &G^1\oplus C\ar[d]^{h'} \ar[r] & 0\\
0 \ar[r] &M' \ar[r] \ar[d]& C \ar[r]\ar[d]  & M \ar[r] \ar[d] & 0 &\\
&0&0&0.&
}$$
Note that $G^1\oplus C\in \mathcal{G}(\mathscr{C})$. Moreover, since $N\in \mathcal{G}(\mathscr{C})^{\perp_1}$,
we have $M''\in \mathcal{G}(\mathscr{C})^{\perp_1}$ by Theorem \ref{3.3}(3).
Thus the rightmost column in the above diagram is a special $\mathcal{G}(\mathscr{C})$-precover of $M$.

Now let $\mathscr{C}$ be a generator for $\mathcal{G}(\mathscr{C})^{\perp_1}$ and (4.1) be
$\Hom_{\mathscr{A}}(\mathscr{C},-)$-exact. Assume that $M$ admits a special $\mathcal{G}(\mathscr{C})$-precover and
$$0\ra L\ra G\ra M\ra 0,$$
$$0\ra L'\ra C'\ra L\ra 0$$
are exact sequences in $\mathscr{A}$ with $G\in \mathcal{G}(\mathscr{C})$, $L\in \mathcal{G}(\mathscr{C})^{\perp_1}$ and $C'\in\mathscr{C}$.
By \cite[Lemma 3.1(1)]{Hu}, we get the following commutative diagram with exact columns and rows
$$
\xymatrix@=0.6cm{
&0\ar[d]&0\ar@{-->}[d]&0\ar[d]&\\
0\ar@{-->}[r]&L'\ar@{-->}[r]\ar[d]&G'\ar@{-->}[r]\ar@{-->}[d]&M'\ar@{-->}[r]\ar[d]&0\\
0\ar@{-->}[r]&C'\ar@{-->}[r]\ar[d]
&C'\oplus C\ar@{-->}[r]\ar@{-->}[d]&C\ar@{-->}[r]\ar[d]&0\\
0\ar[r]&L\ar[r]\ar[d]&G\ar[r]\ar@{-->}[d]&M\ar[r]\ar[d]&0\\
&0&0&0.&
 }
 $$
By Proposition \ref{2.7}(2) and Theorem \ref{3.3}(2), we have $L'\in \mathcal{G}(\mathscr{C})^{\perp_1}$ and the leftmost column
is $\Hom_{\mathscr{A}}(\mathscr{C},-)$-exact. So the middle column is also $\Hom_{\mathscr{A}}(\mathscr{C},-)$-exact.
On the other hand,
the middle column is $\Hom_{\mathscr{A}}(-,\mathscr{C})$-exact by Proposition \ref{2.7}(2).
So $G'\in \mathcal{G}(\mathscr{C})$ by \cite[Proposition 4.7(5)]{Hu}, and hence
the upper row is a special $\mathcal{G}(\mathscr{C})$-precover of $M'$.
\end{proof}

We introduce the following

\begin{definition}\label{4.2}
We call $\spc(\mathcal{G}(\mathscr{C})):=\{A\in\mathscr{A}\mid A$
admits a special $\mathcal{G}(\mathscr{C})$-precover$\}$ the {\it special precovered category} of $\mathcal{G}(\mathscr{C})$.
\end{definition}

It is trivial that $\spc(\mathcal{G}(\mathscr{C}))$ is the largest subcategory of $\mathscr{A}$ such that $\mathcal{G}(\mathscr{C})$
is special precovering in it. In particular, $\spc(\mathcal{G}(\mathscr{C}))=\mathscr{A}$ if and only if $\mathcal{G}(\mathscr{C})$
is special precovering in $\mathscr{A}$. For the sake of convenience, we say that a subcategory $\mathscr{X}$
of $\mathscr{A}$ is {\it closed under $\mathscr{C}$-stable direct summands} provided that the condition $X\oplus C\in\mathscr{X}$
with $C\in \mathscr{C}$ implies $X\in\mathscr{X}$.


\begin{theorem}\label{4.3}
\begin{enumerate}
\item[]
\item[(1)] $\spc(\mathcal{G}(\mathscr{C}))$ is closed under  extensions.
\item[(2)] $\spc(\mathcal{G}(\mathscr{C}))$ is closed under $\mathscr{C}$-stable direct summands.
\end{enumerate}
\end{theorem}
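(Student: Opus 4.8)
The plan is to reduce both parts to Proposition \ref{4.1}, which already packages the interplay between short exact sequences and special $\mathcal{G}(\mathscr{C})$-precovers. For part (1), suppose $0\to A'\to A\to A''\to 0$ is exact in $\mathscr{A}$ with $A',A''\in\spc(\mathcal{G}(\mathscr{C}))$. The natural move is to take special $\mathcal{G}(\mathscr{C})$-precovers $G'\thra A'$ and $G''\thra A''$ with kernels $K',K''\in\mathcal{G}(\mathscr{C})^{\perp_1}$, and then build a special precover of $A$ by a horseshoe-type argument. To make the horseshoe work one needs to lift the map $G''\to A''$ along $A\thra A''$; this is possible because $G''\in\mathcal{G}(\mathscr{C})$ and the obstruction lives in $\Ext^1_{\mathscr{A}}(G'',A')$, which one kills using that $A'$ admits a \emph{special} precover — more precisely, first replace $A'$ by its special precover data and argue that $\Ext^1_{\mathscr{A}}(G'',-)$ behaves well on the relevant sequence, or alternatively apply Proposition \ref{4.1}(2) twice. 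The cleanest route: by Proposition \ref{4.1}(2), since $A'$ admits a special $\mathcal{G}(\mathscr{C})$-precover and we can fit $A'$ into a sequence $0\to A'\to C\to ?$, it suffices instead to work directly with the pullback/pushout squares assembling the two given special precover sequences into one for $A$, checking that the resulting kernel is an extension of $K'$ by $K''$ and hence lies in $\mathcal{G}(\mathscr{C})^{\perp_1}$ by Theorem \ref{3.3}(1), while the resulting middle term is an extension of $G'$ by $G''$ and hence lies in $\mathcal{G}(\mathscr{C})$ since $\mathcal{G}(\mathscr{C})$ is closed under extensions (this closure is part of the standard theory of Gorenstein subcategories, available via \cite[Section 4]{Hu}).

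For part (2), suppose $X\oplus C\in\spc(\mathcal{G}(\mathscr{C}))$ with $C\in\mathscr{C}$; we must show $X\in\spc(\mathcal{G}(\mathscr{C}))$. The idea is to exploit that $C\in\mathscr{C}\subseteq\mathcal{G}(\mathscr{C})^{\perp_1}$ (Example \ref{3.1}(1)) and that $C$ itself has a trivial special $\mathcal{G}(\mathscr{C})$-precover, namely $1_C$ with kernel $0$. Let $G\thra X\oplus C$ be a special precover with kernel $K\in\mathcal{G}(\mathscr{C})^{\perp_1}$. Composing with the split projection $X\oplus C\thra X$ gives an epimorphism $G\thra X$; I would take its kernel $\widetilde K$, which fits into $0\to K\to\widetilde K\to C\to 0$, so $\widetilde K\in\mathcal{G}(\mathscr{C})^{\perp_1}$ by Theorem \ref{3.3}(1) (closure under extensions), and $G\in\mathcal{G}(\mathscr{C})$. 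It remains to check $G\thra X$ is still a $\mathcal{G}(\mathscr{C})$-precover, i.e. $\Hom_{\mathscr{A}}(G',-)$-surjectivity onto $X$ for $G'\in\mathcal{G}(\mathscr{C})$; this follows because any $G'\to X$ lifts to $G'\to X\oplus C$ (via the split inclusion) and then to $G'\to G$. Hence $G\thra X$ is a special $\mathcal{G}(\mathscr{C})$-precover, so $X\in\spc(\mathcal{G}(\mathscr{C}))$.

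I expect the main obstacle to be part (1): assembling the two special precover sequences into a single one requires a careful pullback–pushout diagram chase (two or three interlocking $3\times3$ diagrams), and the delicate point is verifying \emph{simultaneously} that the middle term lands in $\mathcal{G}(\mathscr{C})$ (needing the extension-closure of $\mathcal{G}(\mathscr{C})$ together with possibly a $\Hom_{\mathscr{A}}(-,\mathscr{C})$-exactness bookkeeping as in the proof of Proposition \ref{4.1}(2)) and that the kernel lands in $\mathcal{G}(\mathscr{C})^{\perp_1}$. Part (2) should be essentially formal once one observes the splitting trick above. Throughout, the only external inputs needed are Example \ref{3.1}(1), Theorem \ref{3.3}, Proposition \ref{4.1}, and the basic closure properties of $\mathcal{G}(\mathscr{C})$ recorded in the cited work \cite{Hu}.
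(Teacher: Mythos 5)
Your proposal is correct and follows essentially the same route as the paper: for (1) a horseshoe-type assembly built from the pullback of $M\ra N$ along $G_N\ra N$, with the kernel and middle term of the assembled sequence controlled by the extension-closure of $\mathcal{G}(\mathscr{C})^{\perp_1}$ (Theorem \ref{3.3}(1)) and of $\mathcal{G}(\mathscr{C})$, and for (2) composition with the split projection exactly as in the paper's pullback diagram. The one step you leave implicit in (1) --- producing an extension of $G_L$ by $G_N$ mapping onto the pullback $0\ra L\ra Q\ra G_N\ra 0$ --- is where the paper uses $\Ext^2_{\mathscr{A}}(G_N,L')=0$ (Proposition \ref{2.7}(1)) to get surjectivity of $\Ext^1_{\mathscr{A}}(G_N,G_L)\ra\Ext^1_{\mathscr{A}}(G_N,L)$; note that your first formulation (lifting the map $G_N\ra N$ itself through $M\ra N$) would require the obstruction class in $\Ext^1_{\mathscr{A}}(G_N,L)$ to vanish, which is not available, so the extension-lifting version you settle on is the right one.
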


\begin{proof}

(1) Let
$$0\ra L\ra M\ra N\ra 0$$ be an exact sequence in $\mathscr{A}$.
 Assume that $L$ and $N$ admit special $\mathcal{G}(\mathscr{C})$-precovers and
$$0\ra L'\ra G_L\buildrel {f}\over\ra L\ra 0,$$
$$0\ra N'\ra G_N\buildrel {g}\over\ra N\ra 0$$ are exact sequences in $\mathscr{A}$
with $G_L,G_N\in \mathcal{G}(\mathscr{C})$ and $L',N'\in \mathcal{G}(\mathscr{C})^{\perp_1}$. Consider the following pullback diagram
$$\xymatrix@R=20pt@C=20pt{
0 \ar@{-->}[r] & L \ar@{==}[d] \ar@{-->}[r] & Q \ar@{-->}[d]^{\alpha} \ar@{-->}[r] &G_N\ar[d]^{g} \ar@{-->}[r] & 0\\
0 \ar[r] &L \ar[r] & M \ar[r]  & N \ar[r] & 0. &
}$$
Since $\Ext^2_R(G_N,L')=0$ by Proposition \ref{2.7}(1), we get an epimorphism
$\Ext^1_R(G_N,f):\Ext^1_R(G_N,G_L)\ra \Ext^1_R(G_N,L)$. It induces the following commutative diagram with exact rows
$$\xymatrix@R=20pt@C=20pt{
0 \ar@{-->}[r] & G_L \ar[d]^f \ar@{-->}[r] & G_M \ar@{-->}[d]^{\beta} \ar@{-->}[r] &G_N\ar@{==}[d] \ar@{-->}[r] & 0\\
0 \ar[r] & L \ar@{=}[d] \ar[r] & Q \ar[d]^{\alpha} \ar[r] &G_N\ar[d]^{g} \ar[r] & 0\\
0 \ar[r] &L \ar[r] & M \ar[r]  & N \ar[r] & 0. &
}$$
Set $M':=\Ker\alpha\beta$. Then we get the following commutative diagram with exact columns and rows
$$\xymatrix@R=20pt@C=20pt{
&0\ar[d]&0\ar[d]&0\ar[d]&\\
0\ar@{-->}[r] &L'\ar@{-->}[r]\ar[d]&M'\ar@{-->}[r]\ar[d]&N'\ar@{-->}[r]\ar[d]&0\\
0 \ar[r] & G_L \ar[d] \ar[r] & G_M \ar[d] \ar[r] &G_N\ar[d] \ar[r] & 0\\
0 \ar[r] &L \ar[r] \ar[d]& M \ar[r]\ar[d]  & N \ar[r] \ar[d] & 0 &\\
&0&0&0.&
}$$
Note that $G_M\in\mathcal{G}(\mathscr{C})$ (by \cite[Corollary 4.5]{SSW08}) and
$M'\in \mathcal{G}(\mathscr{C})^{\perp_1}$ (by Theorem \ref{3.3}(1)).
Thus the middle column in the above diagram is a special $\mathcal{G}(\mathscr{C})$-precover of $M$.
This proves that $\spc(\mathcal{G}(\mathscr{C}))$ is closed under extensions.

(2) Let $M\in \spc(\mathcal{G}(\mathscr{C}))$ and
$$0\ra K\ra G\ra M\ra 0$$ be an exact sequence in $\mathscr{A}$
with $G\in \mathcal{G}(\mathscr{C})$ and $K\in \mathcal{G}(\mathscr{C})^{\perp_1}$.
Assume that $M\cong L\oplus C$ with $C\in\mathscr{C}$,
we have an exact and split sequence
$$0\ra C\ra M\ra L\ra 0$$ in $\mathscr{A}$. Consider the following pullback diagram
$$\xymatrix@R=20pt@C=20pt{& & 0 \ar@{-->}[d] & 0 \ar[d]& &\\
0 \ar@{-->}[r] & K \ar@{==}[d] \ar@{-->}[r] & L' \ar@{-->}[d] \ar@{-->}[r] &C \ar[d] \ar@{-->}[r] & 0\\
0 \ar[r] & K \ar[r] & G \ar[r] \ar@{-->}[d] & M \ar[d] \ar[r] & 0 &\\
& & L \ar@{==}[r] \ar@{-->}[d] & L \ar[d]& &\\
& & 0 & 0. & & }$$
Since $K,C\in \mathcal{G}(\mathscr{C})^{\perp_1}$, we have $L'\in\mathcal{G}(\mathscr{C})^{\perp_1}$ by Theorem \ref{3.3}(1).
Thus the middle column in the above diagram is a special $\mathcal{G}(\mathscr{C})$-precover of $L$.
\end{proof}

The following question seems to be interesting.

\begin{question} \label{4.4}
Is $\spc(\mathcal{G}(\mathscr{C}))$ closed under direct summands?
\end{question}

The following result shows that $\spc(\mathcal{G}(\mathscr{C}))$ possesses certain minimality, which generalizes \cite[Theorem 6.8(1)]{Ta}.

\begin{theorem} \label{4.5}
Assume that
$\mathscr{C}$ is a generator for $\mathcal{G}(\mathscr{C})^{\perp_1}$. Then we have
\begin{enumerate}
\item[(1)]
$\mathcal{G}(\mathscr{C})^{\perp_1}\cup \mathcal{G}(\mathscr{C})\subseteq \spc(\mathcal{G}(\mathscr{C}))$ and $\spc(\mathcal{G}(\mathscr{C}))$
is closed under extensions and $\mathscr{C}$-stable direct summands.
\item[(2)] $\spc(\mathcal{G}(\mathscr{C}))$ is the minimal subcategory with respect to the property (1) as above.
\end{enumerate}
\end{theorem}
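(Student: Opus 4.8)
The plan is to prove the two assertions of Theorem \ref{4.5} in turn, leveraging the structural results already established. For part (1), the inclusion $\mathcal{G}(\mathscr{C})\subseteq\spc(\mathcal{G}(\mathscr{C}))$ is immediate since every $G\in\mathcal{G}(\mathscr{C})$ has the identity map as a special $\mathcal{G}(\mathscr{C})$-precover (its kernel is $0\in\mathcal{G}(\mathscr{C})^{\perp_1}$). For the inclusion $\mathcal{G}(\mathscr{C})^{\perp_1}\subseteq\spc(\mathcal{G}(\mathscr{C}))$, I would take $M\in\mathcal{G}(\mathscr{C})^{\perp_1}$; since $\mathscr{C}$ is a generator for $\mathcal{G}(\mathscr{C})^{\perp_1}$, there is an exact sequence $0\to M'\to C\to M\to 0$ with $C\in\mathscr{C}$ and $M'\in\mathcal{G}(\mathscr{C})^{\perp_1}$, and then Proposition \ref{4.1}(1) says $C\twoheadrightarrow M$ is already a special $\mathcal{G}(\mathscr{C})$-precover. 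The closure of $\spc(\mathcal{G}(\mathscr{C}))$ under extensions and $\mathscr{C}$-stable direct summands is exactly Theorem \ref{4.3}, so part (1) is assembled from pieces already in hand.

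The substance of the theorem is part (2): minimality. Here I would let $\mathscr{X}$ be \emph{any} subcategory of $\mathscr{A}$ that contains $\mathcal{G}(\mathscr{C})^{\perp_1}\cup\mathcal{G}(\mathscr{C})$ and is closed under extensions and $\mathscr{C}$-stable direct summands, and show $\spc(\mathcal{G}(\mathscr{C}))\subseteq\mathscr{X}$. So take $M\in\spc(\mathcal{G}(\mathscr{C}))$ with a special $\mathcal{G}(\mathscr{C})$-precover, giving an exact sequence
$$0\to K\to G\to M\to 0$$
with $G\in\mathcal{G}(\mathscr{C})$ and $K\in\mathcal{G}(\mathscr{C})^{\perp_1}$. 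The obvious wish is to conclude directly from closure under extensions applied to $K,G\in\mathscr{X}$ — but that would tell us $G\in\mathscr{X}$ given $K,M\in\mathscr{X}$, which is the wrong direction; extension closure lets us build the middle term from the outer two, not recover the quotient. This is the main obstacle, and the standard device to get around it is an Eilenberg-swindle-type or, more precisely, a $\mathscr{C}$-stable-summand manoeuvre: I would use Lemma \ref{2.6} applied to $G$ to obtain a $\Hom_{\mathscr{A}}(\mathscr{C},-)$-exact exact sequence $0\to G'\to P\to G\to 0$ with $P$ projective and $G'\in\mathcal{G}(\mathscr{C})$, and then splice/pull back along $G\to M$ to produce an exact sequence $0\to G'\to E\to M\to 0$ in which $E$ sits in an extension $0\to K\to E\to P\to 0$ (so $E\in\mathscr{X}$ by extension closure, since $K\in\mathcal{G}(\mathscr{C})^{\perp_1}\subseteq\mathscr{X}$ and $P\in\mathscr{P}(\mathscr{A})\subseteq\mathscr{C}\subseteq\mathscr{X}$). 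Now $0\to G'\to E\to M\to 0$ has $G'\in\mathcal{G}(\mathscr{C})\subseteq\mathscr{X}$ and $E\in\mathscr{X}$; chasing a pullback of this against a special-precover presentation of $G'$ and iterating, or alternatively forming the direct sum with a complete $\mathscr{C}$-resolution to realize $M$ (up to a $\mathscr{C}$-summand) inside an object built by finitely many extensions from $\mathcal{G}(\mathscr{C})$ and $\mathcal{G}(\mathscr{C})^{\perp_1}$, yields $M\oplus C\in\mathscr{X}$ for some $C\in\mathscr{C}$, whence $M\in\mathscr{X}$ by $\mathscr{C}$-stable-summand closure.

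Concretely, the cleanest route I expect to work: from $0\to K\to G\to M\to 0$ with $K\in\mathcal{G}(\mathscr{C})^{\perp_1}$ and a complete $\mathscr{C}$-resolution of $G$, I would assemble the $\Hom_{\mathscr{A}}(\mathscr{C},-)$-exact exact sequence $0\to K'\to C_0\to M\to 0$ where $C_0\in\mathscr{C}$ and $K'$ fits in $0\to K\to K'\to G_1\to 0$ with $G_1\in\mathcal{G}(\mathscr{C})$ (obtained by pulling the presentation of $G$ back along a surjection $C_0\twoheadrightarrow G$ with kernel in $\mathcal{G}(\mathscr{C})$, as in Lemma \ref{2.6}); extension closure then gives $K'\in\mathscr{X}$, and since $C_0\in\mathscr{C}\subseteq\mathscr{X}$, the sequence $0\to K'\to C_0\to M\to 0$ exhibits $M$ as a quotient we still cannot grab — so instead I would, dually, observe that $C_0\oplus G_1\cong$ an extension of $M$ by $K'$ built the other way is not quite available either, and fall back on the swindle: adding $0\to C_0\to C_0\to 0\to 0$ and using that $\mathcal{G}(\mathscr{C})$ admits $\mathscr{C}$ as a generator to telescope $G\oplus(\text{resolution})$ so that $M$ appears as a $\mathscr{C}$-stable summand of a finite-extension build of objects in $\mathcal{G}(\mathscr{C})\cup\mathcal{G}(\mathscr{C})^{\perp_1}$. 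The bookkeeping of this telescoping — ensuring the extra summand genuinely lies in $\mathscr{C}$ and not merely in $\mathcal{G}(\mathscr{C})^{\perp_1}$ — is the delicate point, and it is exactly where the hypothesis that $\mathscr{C}$ (not just $\mathscr{P}(\mathscr{A})$) is a generator for $\mathcal{G}(\mathscr{C})^{\perp_1}$ gets used, via Proposition \ref{4.1}(2).
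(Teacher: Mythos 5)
Part (1) of your proposal is fine and matches the paper: the identity map handles $\mathcal{G}(\mathscr{C})$, the generator hypothesis plus Proposition \ref{4.1}(1) handles $\mathcal{G}(\mathscr{C})^{\perp_1}$, and the closure properties are Theorem \ref{4.3}. The problem is part (2). You correctly identify the obstacle --- extension closure recovers the middle term of $0\ra K\ra G\ra M\ra 0$, not the quotient $M$ --- but you never actually overcome it. Every concrete construction you attempt resolves \emph{onto} $G$ (the syzygy sequence $0\ra G'\ra P\ra G\ra 0$ from Lemma \ref{2.6}, or $C_0\thra G$ with kernel $G_1$), and as you yourself observe each time, this only reproduces $M$ as a quotient of something in $\mathscr{X}$, which is exactly the situation you started in. The final appeal to a ``telescoping swindle'' is not carried out, and the one step where the proof has content is left as an acknowledged ``delicate point.'' (You also misattribute the role of the generator hypothesis: it is needed in part (1) via Proposition \ref{4.1}(1), not in the bookkeeping of part (2).)

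The missing idea is the paper's Lemma \ref{4.6}, and it goes in the \emph{opposite} direction from all of your attempts: one uses the right half of the complete $\mathscr{C}$-resolution of $G$, i.e.\ a $\Hom_{\mathscr{A}}(\mathscr{C},-)$-exact embedding $0\ra G\ra C\ra G'\ra 0$ with $C\in\mathscr{C}$ and $G'\in\mathcal{G}(\mathscr{C})$ (so $\mathscr{C}$ as a cogenerator for $\mathcal{G}(\mathscr{C})$, not a generator). Pushing out $0\ra K\ra G\ra M\ra 0$ along $G\ra C$ yields $0\ra K\ra C\ra K'\ra 0$, whence $K'\in\mathcal{G}(\mathscr{C})^{\perp_1}$ by Theorem \ref{3.3}(3), together with $0\ra M\ra K'\ra G'\ra 0$; pulling the latter back along $C\ra G'$ produces $0\ra M\ra Q\ra C\ra 0$, which is $\Hom_{\mathscr{A}}(\mathscr{C},-)$-exact and hence splits, giving $Q\cong M\oplus C$ and the flipped sequence
$$0\ra G\ra M\oplus C\ra K'\ra 0.$$
Now $M\oplus C\in\mathscr{X}$ by extension closure (both ends lie in $\mathcal{G}(\mathscr{C})\cup\mathcal{G}(\mathscr{C})^{\perp_1}\subseteq\mathscr{X}$), and $M\in\mathscr{X}$ by $\mathscr{C}$-stable summand closure. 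Without this cosyzygy pushout--pullback step your argument does not close.
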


To prove this theorem, we need the following

\begin{lemma}\label{4.6}
Let $$0\ra K\ra G\ra M\ra 0$$ be an exact sequence in $\mathscr{A}$ with $K\in\mathcal{G}(\mathscr{C})^{\perp_1}$ and $G\in\mathcal{G}(\mathscr{C})$.
Then there exists an exact sequence
$$0\ra G\ra M\oplus C\ra K'\ra 0$$
in $\mathscr{A}$ with $K'\in \mathcal{G}(\mathscr{C})^{\perp_1}$ and $C\in\mathscr{C}$.
\end{lemma}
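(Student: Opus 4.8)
The plan is to reduce the statement to a single application of the Horseshoe-type gluing available through Lemma~\ref{2.6}, combined with the projectively resolving property of $\mathcal{G}(\mathscr{C})^{\perp_1}$ established in Theorem~\ref{3.3}(2). Starting from the given exact sequence $0\to K\to G\to M\to 0$, I would first invoke Lemma~\ref{2.6} applied to $G\in\mathcal{G}(\mathscr{C})$: this yields a $\Hom_{\mathscr{A}}(\mathscr{C},-)$-exact and $\Hom_{\mathscr{A}}(-,\mathscr{C})$-exact short exact sequence $0\to G''\to C\to G\to 0$ with $C\in\mathscr{C}$ (note $\mathscr{P}(\mathscr{A})\subseteq\mathscr{C}$ by Example~\ref{3.1}(1), so a projective cover works, or one uses $C\in\mathscr{C}$ directly) and $G''\in\mathcal{G}(\mathscr{C})$. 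Actually, since we want the right-hand term to be in $\mathscr{C}$ in the \emph{pushout} direction, the cleaner move is to use the defining exact sequence for $G$: because $G\in\mathcal{G}(\mathscr{C})$ there is an exact sequence $0\to G\to C^0\to G^1\to 0$ in $\mathscr{A}$ that is both $\Hom_{\mathscr{A}}(\mathscr{C},-)$- and $\Hom_{\mathscr{A}}(-,\mathscr{C})$-exact, with $C^0\in\mathscr{C}$ and $G^1\in\mathcal{G}(\mathscr{C})$.

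\textbf{Main construction.} Form the pushout of $K\to G$ along $G\to C^0$:
$$
\xymatrix@R=18pt@C=18pt{
 & 0 \ar[d] & 0 \ar[d] & & \\
0 \ar[r] & K \ar[d] \ar[r] & G \ar[d] \ar[r] & M \ar@{=}[d] \ar[r] & 0 \\
0 \ar[r] & C^0 \ar[d] \ar[r] & E \ar[d] \ar[r] & M \ar[r] & 0 \\
 & G^1 \ar@{=}[r] \ar[d] & G^1 \ar[d] & & \\
 & 0 & 0. & &
}
$$
From the middle row, $0\to C^0\to E\to M\to 0$, and from the middle column, $0\to G\to E\to G^1\to 0$. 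The middle column is an extension of $G^1\in\mathcal{G}(\mathscr{C})$ by $G\in\mathcal{G}(\mathscr{C})$, hence $E\in\mathcal{G}(\mathscr{C})$ by \cite[Corollary 4.5]{SSW08}. Now I would push out a second time, or rather re-read the middle row: since $C^0\in\mathscr{C}\subseteq\mathcal{G}(\mathscr{C})^{\perp_1}$ and the sequence $0\to G\to C^0\to G^1\to 0$ is $\Hom_{\mathscr{A}}(-,\mathscr{C})$-exact, I can also build the pushout in the other order — push out $G\to C^0$ along $G\to M$ — to get an exact sequence $0\to M\to E'\to G^1\to 0$; but what I actually want is an exact sequence with $G$ on the \emph{left} and $K'$ on the right with $M\oplus C$ in the middle. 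So I take the middle column $0\to G\to E\to G^1\to 0$ and the middle row $0\to C^0\to E\to M\to 0$ of the diagram above, and splice: consider the map $E\to M$ and the split injection data. Concretely, I would add the split sequence $0\to 0\to C^0\xrightarrow{=}C^0\to 0$ appropriately, or better: from $0\to C^0\to E\to M\to 0$ together with $0\to G\to E\to G^1\to 0$, pull back $E\to M$ along... Let me instead run the \emph{dual} of the Proposition~\ref{4.1}(2) argument. Take the sequence $0\to G\to C^0\to G^1\to 0$ and the sequence $0\to K\to G\to M\to 0$; form the pushout of $G\to C^0$ and $G\to M$ giving $0\to K\to C^0\to Y\to 0$ and $0\to M\to Y\to G^1\to 0$ with the last sequence $\Hom_{\mathscr{A}}(\mathscr{C},-)$-exact. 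Since $K\in\mathcal{G}(\mathscr{C})^{\perp_1}$ and $C^0\in\mathscr{C}$, Proposition~\ref{4.1}(1) tells us $0\to K\to C^0\to Y\to 0$ exhibits a special precover; but more to the point, splitting $G^1$ off: because the sequence $0\to M\to Y\to G^1\to 0$ is $\Hom_{\mathscr{A}}(\mathscr{C},-)$-exact and $G^1\in\mathcal{G}(\mathscr{C})\subseteq{}^{\perp}\mathscr{C}$, I can add a suitable split sequence to make $Y\cong M\oplus C$ for some $C\in\mathscr{C}$ after one more pushout against a projective (equivalently $\mathscr{C}$-) cover of $G^1$, namely using $0\to G^2\to C^1\to G^1\to 0$ from Lemma~\ref{2.6} and pulling back $Y\to G^1$ along $C^1\to G^1$ to get $0\to M\to Z\to C^1\to 0$ with $Z$ fitting in $0\to G^2\to Z\to Y\to 0$; then the sequence $0\to M\to Z\to C^1\to 0$ splits (as $C^1\in\mathscr{C}$ and the sequence is $\Hom_{\mathscr{A}}(\mathscr{C},-)$-exact — wait, one needs $M$ on the correct side). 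I will settle the bookkeeping so that the final middle term is $M\oplus C$ with $C=C^0$ or a sum of the $C^i$, the left term is (isomorphic to) $G$, and the right term $K'$ is an extension of terms in $\mathcal{G}(\mathscr{C})^{\perp_1}$ by terms in $\mathscr{C}\subseteq\mathcal{G}(\mathscr{C})^{\perp_1}$, hence lies in $\mathcal{G}(\mathscr{C})^{\perp_1}$ by Theorem~\ref{3.3}(1).

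\textbf{The hard part.} The genuine obstacle is \emph{not} any single homological vanishing — all of those come for free from Proposition~\ref{2.7} and Theorem~\ref{3.3} — but rather arranging the iterated pushouts/pullbacks so that exactly the term $M\oplus C$ appears in the middle with $C\in\mathscr{C}$, while keeping track of which of the several short exact sequences in play are $\Hom_{\mathscr{A}}(\mathscr{C},-)$-exact (this is what lets an auxiliary sequence split off a copy of something in $\mathscr{C}$). I expect the cleanest route is: apply Lemma~\ref{2.6} to $G$ to get $0\to G'\to P\to G\to 0$ with $P\in\mathscr{P}(\mathscr{A})\subseteq\mathscr{C}$, then take the pushout of $G'\to P$ along $G'\to$ (the kernel in a defining coresolution of $G$) — but the slickest is to observe that $G\in\mathcal{G}(\mathscr{C})$ gives $0\to G\xrightarrow{i} C^0\to G^1\to 0$, form the pushout of $i$ along $G\twoheadrightarrow M$... no: form instead the pushout square with corner $G$, arrows $G\to M$ (the given epi, with kernel $K$) and $G\xrightarrow{i} C^0$, producing
$$
0\to G\xrightarrow{\binom{i}{\pi}} C^0\oplus M\to W\to 0,
$$
where $W$ sits in $0\to K\to C^0\to W\to 0$ and in $0\to M\to W\to G^1\to 0$. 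Then $W\in\mathcal{G}(\mathscr{C})^{\perp_1}$: the first of these two sequences has $K\in\mathcal{G}(\mathscr{C})^{\perp_1}$ and $C^0\in\mathscr{C}\subseteq\mathcal{G}(\mathscr{C})^{\perp_1}$, and $\mathcal{G}(\mathscr{C})^{\perp_1}$ is closed under cokernels of monomorphisms by Theorem~\ref{3.3}(3). Setting $C:=C^0$ and $K':=W$, the sequence $0\to G\to C^0\oplus M\to W\to 0 = 0\to G\to M\oplus C\to K'\to 0$ is exactly what is required. I would double-check that the middle term of the pushout is genuinely $C^0\oplus M$ (it is, since a pushout of $G\xrightarrow{i}C^0$ and $G\xrightarrow{\pi}M$ along a common source has the direct-sum middle term in the induced short exact sequence $0\to G\to C^0\oplus M\to W\to 0$), and that $W$ indeed receives the claimed two short exact sequences. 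That finishes the proof, with Theorem~\ref{3.3}(3) doing the only real work.
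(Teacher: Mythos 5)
Your final construction is correct, and it rests on the same two pillars as the paper's proof: coresolve $G$ by $0\ra G\buildrel{i}\over\ra C^0\ra G^1\ra 0$ with $C^0\in\mathscr{C}$ and $G^1\in\mathcal{G}(\mathscr{C})$, push out along the given epimorphism $\pi\colon G\thra M$, and use Theorem \ref{3.3}(3) together with $0\ra K\ra C^0\ra K'\ra 0$ (where $K'$ is the pushout) to see $K'\in\mathcal{G}(\mathscr{C})^{\perp_1}$. Where you genuinely diverge is in how the middle term $M\oplus C$ appears: you read off the canonical short exact sequence of a pushout along a monomorphism, $0\ra G\ra C^0\oplus M\ra K'\ra 0$, directly from the pushout square, whereas the paper performs a second diagram chase --- it pulls back $K'\ra G^1$ along $C^0\ra G^1$ to get $0\ra G\ra Q\ra K'\ra 0$ and $0\ra M\ra Q\ra C^0\ra 0$, and then argues that the latter sequence is $\Hom_{\mathscr{A}}(\mathscr{C},-)$-exact (propagating exactness from the complete resolution of $G$ via \cite[Lemma 2.4(1)]{Hu}) and hence splits, giving $Q\cong M\oplus C^0$. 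Your route is shorter and needs strictly less: the $\Hom_{\mathscr{A}}(\mathscr{C},-)$-exactness of the coresolution plays no role, since the direct-sum middle term comes for free from the universal property of the pushout. (The sign in $\binom{i}{\pi}$ should be $\binom{i}{-\pi}$ or absorbed into the cokernel map, but that is cosmetic.) The one criticism is presentational: the proposal wanders through several abandoned constructions before arriving at the correct one; only the final paragraph constitutes a proof, and it should be extracted and stated as such.
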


\begin{proof}
Let $$0\ra K\ra G\ra M\ra 0$$ be an exact sequence in $\mathscr{A}$ with $K\in\mathcal{G}(\mathscr{C})^{\perp_1}$ and $G\in\mathcal{G}(\mathscr{C})$.
Since $G\in\mathcal{G}(\mathscr{C})$, there exists a $\Hom_{\mathscr{A}}(\mathscr{C},-)$-exact exact sequence
$$0\ra G\ra C\ra G'\ra 0$$ in $\mathscr{A}$ with $C\in\mathscr{C}$ and $G'\in\mathcal{G}(\mathscr{C})$. Consider the following pushout diagram
$$
 \xymatrix@R=20pt@C=20pt{& & 0 \ar[d] & 0 \ar@{-->}[d]& &\\
0 \ar[r] & K \ar@{==}[d] \ar[r] & G \ar[d] \ar[r] &M \ar@{-->}[d] \ar[r] & 0\\
0 \ar@{-->}[r] & K \ar@{-->}[r] & C \ar@{-->}[r] \ar[d] & K' \ar@{-->}[d] \ar@{-->}[r] & 0 &\\
& & G' \ar@{==}[r] \ar[d] & G' \ar@{-->}[d]& &\\
& & 0 & 0. & & }
$$
Since $K,C\in\mathcal{G}(\mathscr{C})^{\perp_1}$, we have $K'\in \mathcal{G}(\mathscr{C})^{\perp_1}$ by Theorem \ref{3.3}(3).

Consider the following pullback diagram
$$
\xymatrix@R=20pt@C=20pt{& & 0 \ar@{-->}[d] & 0 \ar[d]& &\\
& & G \ar@{==}[r] \ar@{-->}[d] & G \ar[d]& &\\
0 \ar@{-->}[r] & M \ar@{==}[d] \ar@{-->}[r] & Q \ar@{-->}[d] \ar@{-->}[r] &C \ar[d] \ar@{-->}[r] & 0\\
0 \ar[r] & M \ar[r] & K' \ar[r] \ar@{-->}[d] & G' \ar[d] \ar[r] & 0 &\\
& & 0 & 0. & & }
$$
Since the middle column in the first diagram is $\Hom_{\mathscr{A}}(\mathscr{C},-)$-exact,
so is the rightmost column in this diagram. Then the middle row in the second diagram is also
$\Hom_{\mathscr{A}}(\mathscr{C},-)$-exact by \cite[Lemma 2.4(1)]{Hu}, and in particular, it splits.
Thus $Q\cong M\oplus C$ and the middle column in the second diagram is the desired exact sequence.
\end{proof}

{\it Proof of Theorem \ref{4.5}.} (1) It follows from Proposition \ref{4.1}(1) and Theorem \ref{4.3}.

(2) Let $\mathscr{X}$ be a subcategory of $\mathscr{A}$ such that $\mathcal{G}(\mathscr{C})^{\perp_1}\cup \mathcal{G}(\mathscr{C})
\subseteq \mathscr{X}$ and $\mathscr{X}$ is closed under extensions and $\mathscr{C}$-stable direct summands.
Let $M\in \spc(\mathcal{G}(\mathscr{C}))$. Then by Lemma \ref{4.6}, we have an exact sequence
$$0\ra G\ra M\oplus C\ra K'\ra 0$$ in $\mathscr{A}$ with $K'\in \mathcal{G}(\mathscr{C})^{\perp_1}$,
$G\in \mathcal{G}(\mathscr{C})$ and $C\in\mathscr{C}$. Because $G,K'\in \mathscr{X}$,
we have that $M\oplus C\in \mathscr{X}$ and $M\in \mathscr{X}$. It follows that $\spc(\mathcal{G}(\mathscr{C}))\subseteq \mathscr{X}$.

\vspace{0.2cm}

As an immediate consequence of Theorem \ref{4.5}, we get the following

\begin{corollary}\label{4.7}
Assume that $\mathcal{G}(\mathscr{P}(\Mod R))$ is special precovering in $\Mod R$ and $\mathscr{X}$ is a subcategory of $\Mod R$.
If $\mathcal{G}(\mathscr{P}(\Mod R))^{\perp_1}\cup \mathcal{G}(\mathscr{P}(\Mod R))\subseteq \mathscr{X}$
and $\mathscr{X}$ is closed under extensions and $\mathscr{P}(\Mod R)$-stable direct summands, then $\mathscr{X}=\Mod R$.
\end{corollary}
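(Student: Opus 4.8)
The plan is to deduce this directly from Theorem \ref{4.5} by specializing the standing setup to $\mathscr{A}=\Mod R$ and $\mathscr{C}=\mathscr{P}(\Mod R)$. First I would check that this choice satisfies the hypotheses in force throughout Sections 3 and 4: the subcategory $\mathscr{P}(\Mod R)$ is trivially self-orthogonal, since projective modules have vanishing higher $\Ext$ into every module, in particular into projectives; and $\mathscr{P}(\Mod R)$ is obviously a generator for itself, so $\mathscr{P}(\mathscr{A})$ is a generator for $\mathscr{C}$ as required.

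Second, and this is the only point requiring a small argument, I would verify that $\mathscr{C}=\mathscr{P}(\Mod R)$ is a generator for $\mathcal{G}(\mathscr{C})^{\perp_1}$ in the sense of Definition \ref{2.5}. Given $M\in\mathcal{G}(\mathscr{C})^{\perp_1}$, since $\Mod R$ has enough projectives we may choose an epimorphism $P\twoheadrightarrow M$ with $P\in\mathscr{P}(\Mod R)$ and form the exact sequence $0\to K\to P\to M\to 0$. By Theorem \ref{3.3}(2) the category $\mathcal{G}(\mathscr{C})^{\perp_1}$ is projectively resolving, hence closed under kernels of epimorphisms, so $K\in\mathcal{G}(\mathscr{C})^{\perp_1}$; as $P\in\mathscr{P}(\Mod R)=\mathscr{C}$, this sequence exhibits $\mathscr{C}$ as a generator for $\mathcal{G}(\mathscr{C})^{\perp_1}$, and thus the hypothesis of Theorem \ref{4.5} is met.

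Finally, the assumption that $\mathcal{G}(\mathscr{P}(\Mod R))$ is special precovering in $\Mod R$ says exactly that $\spc(\mathcal{G}(\mathscr{P}(\Mod R)))=\Mod R$. On the other hand, by Theorem \ref{4.5}(2), $\spc(\mathcal{G}(\mathscr{P}(\Mod R)))$ is the minimal subcategory of $\Mod R$ containing $\mathcal{G}(\mathscr{P}(\Mod R))^{\perp_1}\cup\mathcal{G}(\mathscr{P}(\Mod R))$ that is closed under extensions and $\mathscr{P}(\Mod R)$-stable direct summands. Since $\mathscr{X}$ enjoys exactly these closure properties and contains $\mathcal{G}(\mathscr{P}(\Mod R))^{\perp_1}\cup\mathcal{G}(\mathscr{P}(\Mod R))$ by hypothesis, minimality yields $\Mod R=\spc(\mathcal{G}(\mathscr{P}(\Mod R)))\subseteq\mathscr{X}\subseteq\Mod R$, whence $\mathscr{X}=\Mod R$. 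I do not expect any real obstacle here: the entire content beyond the routine verification in the second step (that enough projectives together with Theorem \ref{3.3}(2) furnish the generating sequences) is a direct invocation of Theorem \ref{4.5}.
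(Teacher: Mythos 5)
Your proof is correct and follows essentially the same route as the paper: identify $\spc(\mathcal{G}(\mathscr{P}(\Mod R)))=\Mod R$ from the special precovering hypothesis and then invoke the minimality statement of Theorem \ref{4.5}(2). The only difference is that you explicitly verify the standing hypotheses and that $\mathscr{P}(\Mod R)$ generates $\mathcal{G}(\mathscr{P}(\Mod R))^{\perp_1}$ (via enough projectives and Theorem \ref{3.3}(2)), details the paper leaves implicit; this verification is correct.
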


\begin{proof}
By assumption, we have $\spc(\mathcal{G}(\mathscr{P}(\Mod R)))=\Mod R$.
Now the assertion follows from Theorem \ref{4.5}.
\end{proof}

We collect some known classes of rings $R$ satisfying that
$\mathcal{G}(\mathscr{P}(\Mod R))$ is special precovering in $\Mod R$ as follows.

\begin{example}\label{4.8}
{\rm For any one of the following rings $R$, $\mathcal{G}(\mathscr{P}(\Mod R))$ is special precovering in $\Mod R$.
\begin{enumerate}
\item[(1)] Commutative Noetherian rings of finite Krull dimension (\cite[Remark 5.8]{CFH11}).
\item[(2)] Rings in which all projective left $R$-modules have finite injective dimension (\cite[Corollary 4.3]{WL});
especially, Gorenstein rings (that is, $n$-Gorenstein rings for some $n\geq 0$).
\item[(3)] Right coherent rings in which all flat $R$-modules
have finite projective dimension (\cite[Theorem 3.5]{ADH} and \cite[Proposition 8.10]{BGM});
especially, right coherent and left perfect rings, and right Artinian rings.
\end{enumerate}}
\end{example}

We recall the following definition from \cite{Hu14}.

\begin{definition}\label{4.9}
{ Let $\mathscr{C}$, $\mathscr{T}$ and $\mathscr{E}$ be
subcategories of $\mathscr{A}$ with $\mathscr{C}\subseteq\mathscr{T}$.
\begin{enumerate}
\item[(1)] $\mathscr{C}$ is called an {\it $\mathscr{E}$-proper generator}
(resp. {\it $\mathscr{E}$-coproper cogenerator}) for $\mathscr{T}$
if for any object $T$ in $\mathscr{T}$, there exists a
$\Hom_{\mathscr{A}}(\mathscr{E},-)$ (resp.
$\Hom_{\mathscr{A}}(-,\mathscr{E})$)-exact exact sequence $0\to
T'\to C \to T \to 0$ (resp. $0\to T\to C \to T' \to 0$) in
$\mathscr{A}$ such that $C$ is an object in $\mathscr{C}$ and
$T'$ is an object in $\mathscr{T}$.

\item[(2)] $\mathscr{T}$ is called {\it
$\mathscr{E}$-preresolving} in $\mathscr{A}$ if the following
conditions are satisfied.

(i) $\mathscr{T}$ admits an $\mathscr{E}$-proper generator.

(ii) $\mathscr{T}$ is {\it closed under $\mathscr{E}$-proper
extensions}, that is, for any
$\Hom_{\mathscr{A}}(\mathscr{E},-)$-exact exact sequence $0\to
A_1\to A_2 \to A_3 \to 0$ in $\mathscr{A}$, if both $A_1$ and $A_3$
are objects in $\mathscr{T}$, then $A_2$ is also an object in
$\mathscr{T}$.

An $\mathscr{E}$-preresolving subcategory $\mathscr{T}$ of
$\mathscr{A}$ is called {\it $\mathscr{E}$-resolving} if the
following condition is satisfied.

(iii) $\mathscr{T}$ is {\it closed under kernels of
$\mathscr{E}$-proper epimorphisms}, that is, for any
$\Hom_{\mathscr{A}}(\mathscr{E},-)$-exact exact sequence $0\to
A_1\to A_2 \to A_3 \to 0$ in $\mathscr{A}$, if both $A_2$ and $A_3$
are objects in $\mathscr{T}$, then $A_1$ is also an object in
$\mathscr{T}$.
\end{enumerate}}
\end{definition}

In the following, we investigate when $\spc(\mathcal{G}(\mathscr{C}))$ is $\mathscr{C}$-resolving. We need the following two lemmas.

\begin{lemma}\label{4.10}
For any $M\in \spc(\mathcal{G}(\mathscr{C}))$, there exists a $\Hom_{\mathscr{A}}(\mathscr{C},-)$-exact exact sequence
$$0\ra K\ra C\ra M\ra 0$$  in $\mathscr{A}$ with $C\in\mathscr{C}$.
\end{lemma}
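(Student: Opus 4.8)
The plan is to start from the defining property of $M \in \spc(\mathcal{G}(\mathscr{C}))$: there exists an exact sequence $0 \to K \to G \to M \to 0$ in $\mathscr{A}$ with $G \in \mathcal{G}(\mathscr{C})$ and $K \in \mathcal{G}(\mathscr{C})^{\perp_1}$. The goal is to ``replace'' the Gorenstein object $G$ by an object of $\mathscr{C}$ at the cost of enlarging the kernel, while keeping $\mathrm{Hom}_{\mathscr{A}}(\mathscr{C},-)$-exactness. The natural device is Lemma \ref{2.6}: applied to $G$, it produces a $\mathrm{Hom}_{\mathscr{A}}(\mathscr{C},-)$-exact (and $\mathrm{Hom}_{\mathscr{A}}(-,\mathscr{C})$-exact) exact sequence $0 \to G' \to P \to G \to 0$ with $P \in \mathscr{P}(\mathscr{A}) \subseteq \mathscr{C}$ and $G' \in \mathcal{G}(\mathscr{C})$.

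First I would form the pullback of $G \to M$ along $P \twoheadrightarrow G$, obtaining a commutative diagram with exact rows and columns whose middle row is $0 \to K \to X \to P \to 0$ (so $X$ fits in $0 \to G' \to X \to K \to 0$ as well), with the left column $0 \to G' \to X \to K \to 0$ and the right column $0 \to G' \to P \to G \to 0$. Wait---I should instead pull back so that the new ``syzygy-type'' sequence is $0 \to K' \to P \to M \to 0$ where $K'$ is an extension of $K$ by $G'$; concretely, pulling back $G \twoheadrightarrow M$ along $P \twoheadrightarrow G$ gives $0 \to K \to K' \to P \to 0$ is not quite it either, so the cleaner route is: take the pullback of the pair $(G \twoheadrightarrow M,\ P \twoheadrightarrow G)$, producing an object $E$ with $0 \to \ker(P\to G) \to E \to M \to 0$, i.e. $0 \to G' \to E \to M \to 0$, together with $0 \to K \to E \to P \to 0$. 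This second sequence is $\mathrm{Hom}_{\mathscr{A}}(\mathscr{C},-)$-exact because it is the pullback of $P \twoheadrightarrow G$, whose kernel sequence $0 \to G' \to P \to G \to 0$ is $\mathrm{Hom}_{\mathscr{A}}(\mathscr{C},-)$-exact by Lemma \ref{2.6}; hence $E$ sits in a $\mathrm{Hom}_{\mathscr{A}}(\mathscr{C},-)$-exact sequence over $P \in \mathscr{C}$, but $E$ need not be in $\mathscr{C}$. So one more step is needed: I would instead run the argument to land on a sequence $0 \to K'' \to C \to M \to 0$ with $C \in \mathscr{C}$ by choosing, via the hypothesis that $\mathscr{P}(\mathscr{A})$ is a generator for $\mathscr{C}$ together with the complete $\mathscr{C}$-resolution of $G$, an epimorphism $C_0 \twoheadrightarrow G$ with $C_0 \in \mathscr{C}$ and kernel in $\mathcal{G}(\mathscr{C})$ (this is exactly the first step of the complete resolution), which is automatically $\mathrm{Hom}_{\mathscr{A}}(\mathscr{C},-)$- and $\mathrm{Hom}_{\mathscr{A}}(-,\mathscr{C})$-exact. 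Pulling back $G \twoheadrightarrow M$ along $C_0 \twoheadrightarrow G$ then gives $0 \to K'' \to C_0 \to M \to 0$ where $K''$ is an extension $0 \to K \to K'' \to \ker(C_0 \to G) \to 0$; this sequence is $\mathrm{Hom}_{\mathscr{A}}(\mathscr{C},-)$-exact since the kernel sequence $0 \to \ker(C_0 \to G) \to C_0 \to G \to 0$ is, and it has middle term $C_0 \in \mathscr{C}$, which is what is wanted.

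The remaining point to verify is $\mathrm{Hom}_{\mathscr{A}}(\mathscr{C},-)$-exactness of the final sequence $0 \to K'' \to C_0 \to M \to 0$. For this I would apply $\mathrm{Hom}_{\mathscr{A}}(C,-)$ for $C \in \mathscr{C}$ and chase the pullback diagram: surjectivity of $\mathrm{Hom}(C, C_0) \to \mathrm{Hom}(C, M)$ follows because $\mathrm{Hom}(C, C_0) \to \mathrm{Hom}(C, G)$ is surjective (the sequence $0 \to \ker(C_0\to G) \to C_0 \to G \to 0$ is $\mathrm{Hom}_{\mathscr{A}}(\mathscr{C},-)$-exact, being part of a complete $\mathscr{C}$-resolution) and $\mathrm{Hom}(C, G) \to \mathrm{Hom}(C, M)$ is surjective by Proposition \ref{4.1}(1) (or directly: $G \to M$ is a special $\mathcal{G}(\mathscr{C})$-precover, hence $\mathrm{Hom}(\mathscr{C},-)$-epic since $\mathscr{C} \subseteq \mathcal{G}(\mathscr{C})$). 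Composing, the surjection $\mathrm{Hom}(C,C_0) \to \mathrm{Hom}(C,M)$ factors correctly through the pullback, giving the claim; alternatively invoke \cite[Lemma 2.4]{Hu} on the pullback square to transfer $\mathrm{Hom}_{\mathscr{A}}(\mathscr{C},-)$-exactness from the kernel row to the new row.

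I expect the main obstacle to be bookkeeping: getting the middle term of the final sequence to be \emph{exactly} an object of $\mathscr{C}$ (not merely an extension over $\mathscr{C}$, nor a projective), which is why the right move is to use the \emph{first syzygy in a complete $\mathscr{C}$-resolution of $G$} rather than the projective cover from Lemma \ref{2.6}, and to check carefully that the relevant kernel sequence inherited from the complete resolution is genuinely $\mathrm{Hom}_{\mathscr{A}}(\mathscr{C},-)$-exact so that the pullback sequence is too. Once the correct intermediate object is chosen, the verification is a routine diagram chase using Proposition \ref{2.7}, Theorem \ref{3.3}, and the pullback lemmas of \cite{Hu}.
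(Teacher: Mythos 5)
Your final construction is exactly the paper's proof: start from the special $\mathcal{G}(\mathscr{C})$-precover $0\to K\to G\to M\to 0$, splice it with the $\Hom_{\mathscr{A}}(\mathscr{C},-)$-exact sequence $0\to G'\to C_0\to G\to 0$ supplied by the complete $\mathscr{C}$-resolution of $G$, and verify that the composite sequence $0\to K''\to C_0\to M\to 0$ is still $\Hom_{\mathscr{A}}(\mathscr{C},-)$-exact (the paper cites \cite[Lemma 2.5]{Hu} for this step, while you compose the two surjections $\Hom_{\mathscr{A}}(C,C_0)\twoheadrightarrow\Hom_{\mathscr{A}}(C,G)\twoheadrightarrow\Hom_{\mathscr{A}}(C,M)$ directly, which amounts to the same thing). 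The only slip is cosmetic and harmless to the conclusion: the kernel of the composite sits in $0\to\ker(C_0\to G)\to K''\to K\to 0$, not with $K$ as the subobject as you wrote.
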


\begin{proof}
Let $M\in \spc(\mathcal{G}(\mathscr{C}))$. Then there exists a $\Hom_{\mathscr{A}}(\mathscr{C},-)$-exact exact sequence
$$0\ra K'\ra G\ra M\ra 0$$ in $\mathscr{A}$ with $G\in \mathcal{G}(\mathscr{C})$ and $K'\in \mathcal{G}(\mathscr{C})^{\perp_1}$.
For $G$, there exists a $\Hom_{\mathscr{A}}(\mathscr{C},-)$-exact exact sequence
$$0\ra G'\ra C\ra G\ra 0$$ in $\mathscr{A}$ with $C\in \mathscr{C}$ and $G'\in \mathcal{G}(\mathscr{C})$.
Consider the following pullback diagram
$$
\xymatrix@R=20pt@C=20pt{ & 0 \ar@{-->}[d] & 0 \ar[d]& &\\
 & G'\ar@{==}[r] \ar@{-->}[d] &G' \ar[d]& &\\
0 \ar@{-->}[r] & K \ar@{-->}[d] \ar@{-->}[r] & C \ar[d] \ar@{-->}[r] &M \ar@{==}[d] \ar@{-->}[r] & 0\\
0 \ar[r] &K' \ar[r] \ar@{-->}[d] & G \ar[r] \ar[d] & M \ar[r] & 0 &\\
 & 0 & 0. & & }
$$
By \cite[Lemma 2.5]{Hu}, the middle row is $\Hom_{\mathscr{A}}(\mathscr{C},-)$-exact, as desired.
\end{proof}

\begin{lemma}\label{4.11} Assume that
$\mathscr{C}$ is a generator for $\mathcal{G}(\mathscr{C})^{\perp_1}$.
Given a $\Hom_{\mathscr{A}}(\mathscr{C},-)$-exact exact sequence
$$0\ra L\ra M\ra N\ra 0$$ in $\mathscr{A}$, we have
\begin{enumerate}
\item[(1)] If $M,N\in \spc(\mathcal{G}(\mathscr{C}))$, then $L\in \spc(\mathcal{G}(\mathscr{C}))$.
\item[(2)] If $L,M\in \spc(\mathcal{G}(\mathscr{C}))$ and there exists a $\Hom_{\mathscr{A}}(\mathscr{C},-)$-exact
exact sequence $$0\ra K\ra C\ra N\ra 0$$ in $\mathscr{A}$ with $C\in\mathscr{C}$, then $N\in \spc(\mathcal{G}(\mathscr{C}))$.
\end{enumerate}
\end{lemma}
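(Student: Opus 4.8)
\textbf{Proof strategy for Lemma \ref{4.11}.} The plan is to exploit the constructions already developed in Proposition \ref{4.1}(2), Lemma \ref{4.10} and Lemma \ref{4.6}, since each part of the statement is essentially a ``three-lemma''-style assembly of short exact sequences from the hypotheses.

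For part (1): starting from the $\Hom_{\mathscr{A}}(\mathscr{C},-)$-exact exact sequence $0\ra L\ra M\ra N\ra 0$ with $M,N\in\spc(\mathcal{G}(\mathscr{C}))$, I would first pick special $\mathcal{G}(\mathscr{C})$-precovers $0\ra M'\ra G_M\ra M\ra 0$ and $0\ra N'\ra G_N\ra N\ra 0$ with $G_M,G_N\in\mathcal{G}(\mathscr{C})$ and $M',N'\in\mathcal{G}(\mathscr{C})^{\perp_1}$. The natural move is a horseshoe-type construction: form the pullback of $M\ra N$ along $G_N\ra N$, and then, using that $\Ext^2_{\mathscr{A}}(G_N,M')=0$ (by Proposition \ref{2.7}(1), so $\Ext^1_{\mathscr{A}}(G_N,G_M)\thra\Ext^1_{\mathscr{A}}(G_N,M)$), lift the extension to get a commutative diagram with a short exact sequence $0\ra G_M\ra G_L\ra G_N\ra 0$ mapping onto $0\ra M\ra ?\ra N\ra 0$; then take kernels vertically. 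One obtains a short exact sequence $0\ra K_L\ra G_L\ra (\text{something over } M)\ra 0$; the point is to land a special precover on $L$. Actually the cleaner route here is: since the sequence is $\Hom_{\mathscr{A}}(\mathscr{C},-)$-exact and $\mathscr{C}$ is a generator for $\mathcal{G}(\mathscr{C})^{\perp_1}$, apply Proposition \ref{4.1}(2) after first replacing $N$ by a $\mathscr{C}$-syzygy; more precisely, by Lemma \ref{4.10} there is a $\Hom_{\mathscr{A}}(\mathscr{C},-)$-exact sequence $0\ra K_N\ra C_N\ra N\ra 0$ with $C_N\in\mathscr{C}$, and one builds from the two $\Hom_{\mathscr{A}}(\mathscr{C},-)$-exact sequences a $3\times 3$ diagram whose top row is $0\ra L\ra \bullet\ra K_N\ra 0$ with $\bullet$ an extension of $C_N$ by $M$; since $C_N\in\mathscr{C}\subseteq\spc(\mathcal{G}(\mathscr{C}))$ and $M\in\spc(\mathcal{G}(\mathscr{C}))$, Theorem \ref{4.3}(1) gives $\bullet\in\spc(\mathcal{G}(\mathscr{C}))$, and then the top row is a $\Hom_{\mathscr{A}}(\mathscr{C},-)$-exact sequence $0\ra L\ra \bullet\ra K_N\ra 0$ of the form $(4.1)$ (with $\bullet$ replaced by a $\mathscr{C}$-term after one more step). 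So I would reduce part (1) to the ``converse'' direction of Proposition \ref{4.1}(2).

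For part (2): here $L,M\in\spc(\mathcal{G}(\mathscr{C}))$ and we are additionally handed a $\Hom_{\mathscr{A}}(\mathscr{C},-)$-exact sequence $0\ra K\ra C\ra N\ra 0$ with $C\in\mathscr{C}$. The plan is to pull back $M\thra N$ along $C\thra N$ to get $0\ra K\ra M'\ra M\ra 0$ sitting over $0\ra K\ra C\ra N\ra 0$, with $M'$ the pullback; this comparison diagram has middle column $0\ra L\ra M'\ra C\ra 0$. Chasing the $\Hom_{\mathscr{A}}(\mathscr{C},-)$-exactness (using \cite[Lemma 2.4]{Hu} or \cite[Lemma 2.5]{Hu} on the relevant rows/columns) shows $0\ra L\ra M'\ra C\ra 0$ is $\Hom_{\mathscr{A}}(\mathscr{C},-)$-exact, and since $L\in\spc(\mathcal{G}(\mathscr{C}))$ and $C\in\mathscr{C}\subseteq\spc(\mathcal{G}(\mathscr{C}))$, Theorem \ref{4.3}(1) yields $M'\in\spc(\mathcal{G}(\mathscr{C}))$. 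Now $0\ra M\ra M'\ra K\ra 0$? — wait, I need the direction right: the pullback gives $0\ra K\ra M'\ra M\ra 0$, i.e.\ $M'$ is an extension of $M$ by $K$. Since $M\in\spc(\mathcal{G}(\mathscr{C}))$ and we want to conclude about $N$, the better diagram is instead to push out or to run Lemma \ref{4.6} on a special precover of $M$: take $0\ra K_M\ra G\ra M\ra 0$ with $G\in\mathcal{G}(\mathscr{C})$, $K_M\in\mathcal{G}(\mathscr{C})^{\perp_1}$, compose with $M\thra N$ to get $0\ra K'_M\ra G\ra N\ra 0$ where $K'_M$ is an extension of $K_M$ by (the kernel $L$) — no: composing $G\thra M\thra N$ has kernel an extension of $K_M$ by $L$, so kernel lies in an exact sequence $0\ra K_M\ra \Ker\ra L\ra 0$. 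Since $L\in\spc(\mathcal{G}(\mathscr{C}))$... hmm, $\Ker$ need not be in $\mathcal{G}(\mathscr{C})^{\perp_1}$. So the honest approach for (2): use the given $0\ra K\ra C\ra N\ra 0$ to form, via the octahedral/$3\times3$ lemma, an exact sequence $0\ra L\ra Y\ra C\ra 0$ with $Y$ an extension of $M$ by ... Let me not over-specify; the clean statement is: build a $3\times3$ diagram with bottom row $0\ra L\ra M\ra N\ra 0$, rightmost column $0\ra K\ra C\ra N\ra 0$, hence middle column $0\ra L\ra M\oplus_N C\ra C\ra 0$ — no. I will instead construct the $3\times3$ whose columns are $0\ra L\ra L\ra 0\ra 0$-type padded appropriately. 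Given the tools, the cleanest realization is: pull back $0\ra L\ra M\ra N\ra 0$ along $C\ra N$ to obtain $0\ra L\ra P\ra C\ra 0$ (middle column) and $0\ra K\ra P\ra M\ra 0$ (the top row of the $3\times3$, reading the pullback the other way), where the displayed $3\times3$ has rows $0\ra K\ra P\ra M\ra 0$, $0\ra K\ra C\ra N\ra 0$ is a column, etc. From $0\ra L\ra P\ra C\ra 0$ being $\Hom_{\mathscr{A}}(\mathscr{C},-)$-exact (as $C\in\mathscr{C}$) and $L,C\in\spc(\mathcal{G}(\mathscr{C}))$, part (1) already proved does \emph{not} directly apply (that needs $P,C$), but Theorem \ref{4.3}(1) gives $P\in\spc(\mathcal{G}(\mathscr{C}))$. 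Then $0\ra K\ra P\ra M\ra 0$: is it $\Hom_{\mathscr{A}}(\mathscr{C},-)$-exact? It is a pullback of $0\ra K\ra C\ra N\ra 0$ along $M\ra N$, and $0\ra K\ra C\ra N\ra 0$ is $\Hom_{\mathscr{A}}(\mathscr{C},-)$-exact, so by \cite[Lemma 2.4/2.5]{Hu} so is the pullback. Hence by Lemma \ref{4.10}-type reasoning combined with part (1) applied to $0\ra K\ra P\ra M\ra 0$ — we get $K\in\spc(\mathcal{G}(\mathscr{C}))$ once $P,M$ are; but we want $N$. Finally, from $0\ra K\ra C\ra N\ra 0$ with $K\in\spc(\mathcal{G}(\mathscr{C}))$, $C\in\mathscr{C}\subseteq\spc(\mathcal{G}(\mathscr{C}))$, and this sequence $\Hom_{\mathscr{A}}(\mathscr{C},-)$-exact of the form $(4.1)$, Proposition \ref{4.1}(2) (converse direction, valid since $\mathscr{C}$ generates $\mathcal{G}(\mathscr{C})^{\perp_1}$) gives $N\in\spc(\mathcal{G}(\mathscr{C}))$.

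\textbf{Main obstacle.} The delicate point throughout is bookkeeping the $\Hom_{\mathscr{A}}(\mathscr{C},-)$-exactness as it propagates through the pullback/pushout $3\times3$ diagrams — one must invoke \cite[Lemma 2.4]{Hu} and \cite[Lemma 2.5]{Hu} at each stage to guarantee that the newly produced rows and columns remain $\mathscr{C}$-proper, since the hypothesis ``$\mathscr{C}$ is a generator for $\mathcal{G}(\mathscr{C})^{\perp_1}$'' and hence the converse half of Proposition \ref{4.1}(2) is only available for $\mathscr{C}$-proper sequences. The second subtle point is ensuring the exact sequences I feed into Proposition \ref{4.1}(2) genuinely have a $\mathscr{C}$-object in the middle (the form $0\ra M'\ra C\ra M\ra 0$), which is exactly why the extra hypothesis in part (2) — the existence of $0\ra K\ra C\ra N\ra 0$ — is needed and cannot be dispensed with; aligning the constructed diagrams so that this $C$-term appears in the right slot is the crux of the argument.
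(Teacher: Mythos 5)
Your part (2), once the false starts are stripped away, is essentially the paper's argument: pull back $M\ra N$ along $C\ra N$ to get $T$ with $0\ra L\ra T\ra C\ra 0$ and $0\ra K\ra T\ra M\ra 0$; conclude $T\in\spc(\mathcal{G}(\mathscr{C}))$ by Theorem \ref{4.3}(1) from $L,C\in\spc(\mathcal{G}(\mathscr{C}))$; get $K\in\spc(\mathcal{G}(\mathscr{C}))$ from part (1) applied to the middle column, which is $\Hom_{\mathscr{A}}(\mathscr{C},-)$-exact by \cite[Lemma 2.4(1)]{Hu}; and finish by Proposition \ref{4.1}(2) on $0\ra K\ra C\ra N\ra 0$ (note this last step is the \emph{forward} direction of that proposition, not the converse as you label it).

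Part (1), however, has a genuine gap. The pullback of $M\ra N$ along the sequence $0\ra K\ra C\ra N\ra 0$ supplied by Lemma \ref{4.10} produces exactly the two sequences $0\ra L\ra T\ra C\ra 0$ and $0\ra K\ra T\ra M\ra 0$; there is no row of the form $0\ra L\ra \bullet\ra K\ra 0$, and $T$ is not an extension of $C$ by $M$, so the sequence you feed into Theorem \ref{4.3}(1) does not exist. The only sequence of the diagram containing both $T$ and $C$ also contains $L$, the unknown, so extension closure cannot be applied to it; the correct order is to first obtain $K\in\spc(\mathcal{G}(\mathscr{C}))$ from the converse half of Proposition \ref{4.1}(2) applied to $0\ra K\ra C\ra N\ra 0$ (using $N\in\spc(\mathcal{G}(\mathscr{C}))$ and the generator hypothesis), and only then apply Theorem \ref{4.3}(1) to $0\ra K\ra T\ra M\ra 0$ to get $T\in\spc(\mathcal{G}(\mathscr{C}))$. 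More seriously, your endgame --- reducing to the converse of Proposition \ref{4.1}(2) ``after replacing $\bullet$ by a $\mathscr{C}$-term'' --- is not the right mechanism and cannot be carried out, since that proposition requires a sequence with middle term in $\mathscr{C}$ and kernel $L$, which you have no way to produce. The actual conclusion comes from the observation that $0\ra L\ra T\ra C\ra 0$ is $\Hom_{\mathscr{A}}(\mathscr{C},-)$-exact by \cite[Lemma 2.4(1)]{Hu} and therefore \emph{splits} because $C\in\mathscr{C}$, whence $T\cong L\oplus C$ and $L\in\spc(\mathcal{G}(\mathscr{C}))$ by closure under $\mathscr{C}$-stable direct summands (Theorem \ref{4.3}(2)) --- the one tool your sketch never invokes, and the reason that notion was introduced in the first place.
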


\begin{proof}
Let $  0\ra L\ra M\ra N\ra 0$
be a $\Hom_{\mathscr{A}}(\mathscr{C},-)$-exact exact sequence in $\mathscr{A}$.

(1) Assume that $M,N\in \spc(\mathcal{G}(\mathscr{C}))$. By Lemma \ref{4.10},
there exists a $\Hom_{\mathscr{A}}(\mathscr{C},-)$-exact exact sequence
$$0\ra K\ra C\ra N\ra 0$$ in $\mathscr{A}$ with $C\in\mathscr{C}$.
Consider the following pullback diagram
$$\xymatrix@R=20pt@C=20pt{& & 0 \ar@{-->}[d] & 0 \ar[d]& &\\
& & K \ar@{==}[r] \ar@{-->}[d] & K \ar[d]& &\\
0 \ar@{-->}[r] & L \ar@{==}[d] \ar@{-->}[r] & T \ar@{-->}[d] \ar@{-->}[r] &C \ar[d] \ar@{-->}[r] & 0\\
0 \ar[r] & L \ar[r] & M \ar[r] \ar@{-->}[d] & N \ar[d] \ar[r] & 0 &\\
& & 0 & 0. & & }
$$
By Proposition \ref{4.1}(2), $K\in \spc(\mathcal{G}(\mathscr{C}))$. Then it follows from Theorem \ref{4.3}(1) and the exactness of
the middle column that $T\in \spc(\mathcal{G}(\mathscr{C}))$. Notice that the middle row is $\Hom_{\mathscr{A}}(\mathscr{C},-)$-exact
by \cite[Lemma 2.4(1)]{Hu}, so it splits and $T\cong L\oplus C$. Thus $L\in \spc(\mathcal{G}(\mathscr{C}))$ by Theorem \ref{4.3}(2).

(2) Assume $L,M\in \spc(\mathcal{G}(\mathscr{C}))$ and there exists a $\Hom_{\mathscr{A}}(\mathscr{C},-)$-exact
exact sequence $$0\ra K\ra C\ra N\ra 0$$ in $\mathscr{A}$ with $C\in\mathscr{C}$.
As in the above diagram, since $L,C\in  \spc(\mathcal{G}(\mathscr{C}))$, we have $T\in \spc(\mathcal{G}(\mathscr{C}))$ by
Theorem \ref{4.3}(1). Moreover, the middle column is $\Hom_{\mathscr{A}}(\mathscr{C},-)$-exact by \cite[Lemma 2.4(1)]{Hu}.
So $K\in \spc(\mathcal{G}(\mathscr{C}))$ by (1), and hence $N\in \spc(\mathcal{G}(\mathscr{C}))$ by Proposition \ref{4.1}(2).
\end{proof}

Now we are ready to prove the following

\begin{theorem}\label{4.12}
If $\mathscr{C}$ is a generator for $\mathcal{G}(\mathscr{C})^{\perp_1}$, then
$\spc(\mathcal{G}(\mathscr{C}))$ is $\mathscr{C}$-resolving in $\mathscr{A}$ with a $\mathscr{C}$-proper generator $\mathscr{C}$.
\end{theorem}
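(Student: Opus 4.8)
The plan is to verify the three structural requirements in Definition \ref{4.9} for $\mathscr{T}=\spc(\mathcal{G}(\mathscr{C}))$ and $\mathscr{E}=\mathscr{C}$, drawing entirely on the lemmas already assembled. First I would establish that $\mathscr{C}$ is a $\mathscr{C}$-proper generator for $\spc(\mathcal{G}(\mathscr{C}))$: given $M\in\spc(\mathcal{G}(\mathscr{C}))$, Lemma \ref{4.10} supplies a $\Hom_{\mathscr{A}}(\mathscr{C},-)$-exact exact sequence $0\ra K\ra C\ra M\ra 0$ with $C\in\mathscr{C}$, so the only thing left to check is that the kernel term $K$ again lies in $\spc(\mathcal{G}(\mathscr{C}))$. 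But $C\in\mathscr{C}\subseteq\mathcal{G}(\mathscr{C})^{\perp_1}$ by Example \ref{3.1}(1), and $\mathcal{G}(\mathscr{C})^{\perp_1}\subseteq\spc(\mathcal{G}(\mathscr{C}))$ by Theorem \ref{4.5}(1) (using the hypothesis that $\mathscr{C}$ generates $\mathcal{G}(\mathscr{C})^{\perp_1}$); hence $C,M\in\spc(\mathcal{G}(\mathscr{C}))$, and Lemma \ref{4.11}(1) applied to this $\Hom_{\mathscr{A}}(\mathscr{C},-)$-exact sequence gives $K\in\spc(\mathcal{G}(\mathscr{C}))$. This is also the step establishing condition (iii): closure under kernels of $\mathscr{C}$-proper epimorphisms is precisely the content of Lemma \ref{4.11}(1), stated for an arbitrary $\Hom_{\mathscr{A}}(\mathscr{C},-)$-exact exact sequence $0\ra L\ra M\ra N\ra 0$ with $M,N\in\spc(\mathcal{G}(\mathscr{C}))$.

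It remains to verify condition (ii), closure under $\mathscr{C}$-proper extensions: if $0\ra A_1\ra A_2\ra A_3\ra 0$ is $\Hom_{\mathscr{A}}(\mathscr{C},-)$-exact with $A_1,A_3\in\spc(\mathcal{G}(\mathscr{C}))$, then $A_2\in\spc(\mathcal{G}(\mathscr{C}))$. This follows immediately from Theorem \ref{4.3}(1), which asserts that $\spc(\mathcal{G}(\mathscr{C}))$ is closed under extensions with no properness hypothesis needed at all — so the $\mathscr{C}$-proper version is a special case. Assembling: $\spc(\mathcal{G}(\mathscr{C}))$ admits the $\mathscr{C}$-proper generator $\mathscr{C}$ (part one above), is closed under $\mathscr{C}$-proper extensions (Theorem \ref{4.3}(1)), and is closed under kernels of $\mathscr{C}$-proper epimorphisms (Lemma \ref{4.11}(1)), which is exactly the definition of being $\mathscr{C}$-resolving in $\mathscr{A}$ with $\mathscr{C}$-proper generator $\mathscr{C}$.

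I do not expect a genuine obstacle here, since every ingredient has been isolated in advance; the proof is essentially a bookkeeping assembly of Lemma \ref{4.10}, Lemma \ref{4.11}(1), Theorem \ref{4.3}(1), and the inclusion $\mathscr{C}\subseteq\mathcal{G}(\mathscr{C})^{\perp_1}\subseteq\spc(\mathcal{G}(\mathscr{C}))$. The one point that needs a moment's care is confirming that $\mathscr{C}$ itself, as the prospective $\mathscr{C}$-proper generator, is contained in $\spc(\mathcal{G}(\mathscr{C}))$ — this is where the standing hypothesis that $\mathscr{C}$ is a generator for $\mathcal{G}(\mathscr{C})^{\perp_1}$ is used, via Theorem \ref{4.5}(1), and without it the proof would break down. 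A secondary subtlety is making sure the sequence produced by Lemma \ref{4.10} is genuinely $\Hom_{\mathscr{A}}(\mathscr{C},-)$-exact (it is, by construction) so that Lemma \ref{4.11}(1) applies to conclude the kernel lies in $\spc(\mathcal{G}(\mathscr{C}))$; this closes the verification that $\mathscr{C}$ is a $\mathscr{C}$-proper generator rather than merely a generator.
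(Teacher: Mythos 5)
Your proof is correct and follows essentially the same route as the paper: Lemma \ref{4.10} produces the $\Hom_{\mathscr{A}}(\mathscr{C},-)$-exact sequence, Theorem \ref{4.3}(1) and Lemma \ref{4.11}(1) give the two closure properties, and the kernel $K$ is placed in $\spc(\mathcal{G}(\mathscr{C}))$. The only cosmetic difference is that the paper concludes $K\in\spc(\mathcal{G}(\mathscr{C}))$ by citing Proposition \ref{4.1}(2) directly, whereas you reach the same conclusion via Lemma \ref{4.11}(1) together with the inclusion $\mathscr{C}\subseteq\mathcal{G}(\mathscr{C})^{\perp_1}\subseteq\spc(\mathcal{G}(\mathscr{C}))$; both are valid.
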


\begin{proof}
Following Theorem \ref{4.3}(1) and Lemma \ref{4.11}(1), we know that $\spc(\mathcal{G}(\mathscr{C}))$ is closed under
$\mathscr{C}$-proper extensions and kernels of $\mathscr{C}$-proper epimorphisms. Now let $M\in \spc(\mathcal{G}(\mathscr{C}))$.
Then by Lemma \ref{4.10}, there exists a $\Hom_{\mathscr{A}}(\mathscr{C},-)$-exact exact sequence
$$0\ra K\ra C\ra M\ra 0$$
in $\mathscr{A}$ with $C\in\mathscr{C}$. By Proposition \ref{4.1}(2), we have $K\in \spc(\mathcal{G}(\mathscr{C}))$.
It follows that $\mathscr{C}$ is a $\mathscr{C}$-proper generator for $\spc(\mathcal{G}(\mathscr{C}))$ and
$\spc(\mathcal{G}(\mathscr{C}))$ is a $\mathscr{C}$-resolving.
\end{proof}

As a consequence, we get the following

\begin{corollary}\label{4.13}
If $\mathscr{C}$ is a projective generator for $\mathscr{A}$, then $\spc(\mathcal{G}(\mathscr{C}))$ is projectively resolving
and injectively coresolving in $\mathscr{A}$.
\end{corollary}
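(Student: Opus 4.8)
The plan is to derive this corollary from Theorem \ref{4.12} together with its proof, specializing to the case where $\mathscr{C}$ is a projective generator for $\mathscr{A}$. First I would observe that under this hypothesis $\mathscr{C}\perp\mathscr{A}$, so in particular $\mathscr{C}\perp\mathscr{C}$ (the self-orthogonality standing assumption is automatic), and moreover $\mathscr{P}(\mathscr{A})\subseteq\mathscr{C}$ by taking the generating sequence $0\to T'\to C\to T\to 0$ for $T$ projective and using that it splits; hence $\mathscr{P}(\mathscr{A})$ is trivially a generator for $\mathscr{C}$. I would then check that $\mathscr{C}$ is a generator for $\mathcal{G}(\mathscr{C})^{\perp_1}$: given $M\in\mathcal{G}(\mathscr{C})^{\perp_1}$, pick an epimorphism $C\twoheadrightarrow M$ with $C\in\mathscr{C}$ (possible since $\mathscr{C}$ generates $\mathscr{A}$), and by Proposition \ref{4.1}(1) its kernel lies in $\mathcal{G}(\mathscr{C})^{\perp_1}$. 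So the hypothesis of Theorem \ref{4.12} is met, and $\spc(\mathcal{G}(\mathscr{C}))$ is $\mathscr{C}$-resolving with $\mathscr{C}$-proper generator $\mathscr{C}$.

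Next I would upgrade "$\mathscr{C}$-resolving" to the absolute notions of "projectively resolving" and "injectively coresolving". The key point is that when $\mathscr{C}\perp\mathscr{A}$, every short exact sequence in $\mathscr{A}$ is automatically $\Hom_{\mathscr{A}}(\mathscr{C},-)$-exact: applying $\Hom_{\mathscr{A}}(C,-)$ for $C\in\mathscr{C}$ to $0\to A_1\to A_2\to A_3\to 0$ gives a long exact sequence whose connecting term $\Ext^1_{\mathscr{A}}(C,A_1)$ vanishes. Consequently "$\mathscr{C}$-proper extension" means "extension", and "kernel of a $\mathscr{C}$-proper epimorphism" means "kernel of an epimorphism". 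Thus $\spc(\mathcal{G}(\mathscr{C}))$ is closed under extensions (already Theorem \ref{4.3}(1)) and under kernels of epimorphisms. It remains to note $\mathscr{P}(\mathscr{A})\subseteq\spc(\mathcal{G}(\mathscr{C}))$: indeed $\mathscr{P}(\mathscr{A})\subseteq\mathscr{C}\subseteq\mathcal{G}(\mathscr{C})^{\perp_1}\subseteq\spc(\mathcal{G}(\mathscr{C}))$ by Example \ref{3.1}(1) and Theorem \ref{4.5}(1) (whose hypothesis we have just verified), or directly from Proposition \ref{4.1}(1). This gives "projectively resolving".

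For "injectively coresolving" I would show $\mathscr{I}(\mathscr{A})\subseteq\spc(\mathcal{G}(\mathscr{C}))$ and that $\spc(\mathcal{G}(\mathscr{C}))$ is closed under extensions (done) and under cokernels of monomorphisms. The inclusion $\mathscr{I}(\mathscr{A})\subseteq\mathcal{G}(\mathscr{C})^{\perp_1}\subseteq\spc(\mathcal{G}(\mathscr{C}))$ holds by Theorem \ref{3.3}(3) and Theorem \ref{4.5}(1). For cokernels of monomorphisms: given $0\to L\to M\to N\to 0$ with $L,M\in\spc(\mathcal{G}(\mathscr{C}))$, I want $N\in\spc(\mathcal{G}(\mathscr{C}))$. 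Since the sequence is $\Hom_{\mathscr{A}}(\mathscr{C},-)$-exact (again because $\mathscr{C}\perp\mathscr{A}$), by Lemma \ref{4.11}(2) it suffices to produce a $\Hom_{\mathscr{A}}(\mathscr{C},-)$-exact exact sequence $0\to K\to C\to N\to 0$ with $C\in\mathscr{C}$ — and this exists because $\mathscr{C}$ generates $\mathscr{A}$ and, once more, every short exact sequence is $\Hom_{\mathscr{A}}(\mathscr{C},-)$-exact. Hence Lemma \ref{4.11}(2) applies and $N\in\spc(\mathcal{G}(\mathscr{C}))$.

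I expect the only real subtlety to be bookkeeping: making sure the standing assumptions of Sections 3–4 (namely $\mathscr{C}\perp\mathscr{C}$ and $\mathscr{P}(\mathscr{A})$ a generator for $\mathscr{C}$) genuinely follow from "$\mathscr{C}$ is a projective generator for $\mathscr{A}$", and that the $\Hom_{\mathscr{A}}(\mathscr{C},-)$-exactness is invoked in exactly the places Lemmas \ref{4.10}, \ref{4.11} and Theorem \ref{4.12} require it. No new computation is needed beyond the observation that $\mathscr{C}\perp\mathscr{A}$ collapses all the $\mathscr{C}$-relative hypotheses to their absolute counterparts.
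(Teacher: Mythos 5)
Your proposal is correct and follows essentially the same route as the paper: you check that $\mathscr{C}$ is a (projective) generator for $\mathcal{G}(\mathscr{C})^{\perp_1}$ via Theorem \ref{3.3}(2), invoke Theorem \ref{4.12}, observe that $\mathscr{C}\perp\mathscr{A}$ makes every short exact sequence $\Hom_{\mathscr{A}}(\mathscr{C},-)$-exact so the $\mathscr{C}$-relative notions collapse to the absolute ones, and handle the injectively coresolving part exactly as the paper does (a special precover of each injective plus Lemma \ref{4.11}(2)). The only cosmetic caveat is that your claim $\mathscr{P}(\mathscr{A})\subseteq\mathscr{C}$ would require $\mathscr{C}$ to be closed under direct summands, but it is not actually needed, since $\mathscr{P}(\mathscr{A})\subseteq\mathcal{G}(\mathscr{C})^{\perp_1}\subseteq\spc(\mathcal{G}(\mathscr{C}))$ directly.
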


\begin{proof}
Let $\mathscr{C}$ be a projective generator for $\mathscr{A}$. Because $\mathcal{G}(\mathscr{C})^{\perp_1}$ is projectively resolving
by Theorem \ref{3.3}(2), $\mathscr{C}$ is also a projective generator for $\mathcal{G}(\mathscr{C})^{\perp_1}$.
It follows from Theorem \ref{4.12} that $\spc(\mathcal{G}(\mathscr{C}))$ is projectively resolving.
Now let $I$ be an injective object in $\mathscr{A}$ and
$$0\ra K \ra P \buildrel {f}\over\ra I \ra 0$$ an exact sequence in $\mathscr{A}$ with $P\in\mathscr{C}$.
Then it is easy to see that $K\in\mathcal{G}(\mathscr{C})^{\perp_1}$ by Example \ref{3.1}(1) and Theorem \ref{3.3}(2).
So $f$ is a special $\mathcal{G}(\mathscr{C})$-precover of $I$ and $I\in \spc(\mathcal{G}(\mathscr{C}))$.
On the other hand, by Lemma \ref{4.11}(2), we have that $\spc(\mathcal{G}(\mathscr{C}))$ is closed under
cokernels of monomorphisms. Thus we conclude that $\spc(\mathcal{G}(\mathscr{C}))$ is injectively coresolving.
\end{proof}

The following corollary is an immediate consequence of Corollary \ref{4.13}, in which the second assertion generalizes
\cite[Theorem 6.8(2)]{Ta}.

\begin{corollary}\label{4.14}
\begin{enumerate}
\item[]
\item[(1)] $\spc(\mathcal{G}(\mathscr{P}(\Mod R)))$ is projectively resolving and injectively coresolving in $\Mod R$.
\item[(2)] If $R$ is a left Noetherian ring, then $\spc(\mathcal{G}(\mathscr{P}(\mod R)))$ is projectively resolving and
injectively coresolving in $\mod R$.
\end{enumerate}
\end{corollary}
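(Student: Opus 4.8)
The plan is to obtain both assertions as immediate specializations of Corollary \ref{4.13}, so the only real work is to check that its hypothesis — that $\mathscr{C}$ is a projective generator for the ambient abelian category — is satisfied in each case, together with the two standing assumptions of Sections 3--4 (namely $\mathscr{C}\perp\mathscr{C}$ and $\mathscr{P}(\mathscr{A})$ a generator for $\mathscr{C}$).

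For (1) I would take $\mathscr{A}=\Mod R$ and $\mathscr{C}=\mathscr{P}(\Mod R)$. Since here $\mathscr{C}=\mathscr{P}(\mathscr{A})$, the standing assumptions are automatic: $\mathscr{P}(\mathscr{A})$ is trivially a generator for itself, and projectives are self-orthogonal because $\Ext^{\geq 1}_{R}(P,-)=0$ whenever $P$ is projective. Moreover $\mathscr{P}(\Mod R)$ is a projective generator for $\Mod R$ in the sense of Definition \ref{2.5}: every module $M$ is a quotient of a projective, giving an exact sequence $0\to M'\to P\to M\to 0$ in $\Mod R$ with $P\in\mathscr{P}(\Mod R)$, and $\mathscr{P}(\Mod R)\perp\Mod R$ since $\Ext^{\geq 1}_{R}(P,N)=0$ for all projective $P$ and all $N\in\Mod R$. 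Corollary \ref{4.13} then gives that $\spc(\mathcal{G}(\mathscr{P}(\Mod R)))$ is projectively resolving and injectively coresolving in $\Mod R$.

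For (2) I would take $\mathscr{A}=\mod R$; this is where the hypothesis that $R$ is left Noetherian enters, since it is exactly what guarantees that $\mod R$ is abelian (closed under submodules and quotients, hence under kernels and cokernels) with enough projectives. Taking $\mathscr{C}=\mathscr{P}(\mod R)$, the verification from (1) applies verbatim: finitely generated projectives are self-orthogonal, every object of $\mod R$ is a quotient of a finitely generated projective (as $R\in\mod R$), and $\mathscr{P}(\mod R)\perp\mod R$, so $\mathscr{P}(\mod R)$ is a projective generator for $\mod R$. A second application of Corollary \ref{4.13} finishes the proof. There is no genuine obstacle here beyond this bookkeeping; the one point worth stating explicitly is the reduction itself, i.e. recognizing $\spc(\mathcal{G}(\mathscr{P}(\Mod R)))$ and $\spc(\mathcal{G}(\mathscr{P}(\mod R)))$ as instances of $\spc(\mathcal{G}(\mathscr{C}))$ with $\mathscr{C}$ a projective generator, after which both conclusions are inherited directly.
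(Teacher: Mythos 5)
Your proposal is correct and matches the paper's approach exactly: the paper simply declares Corollary \ref{4.14} to be an immediate consequence of Corollary \ref{4.13}, and your verification that $\mathscr{P}(\Mod R)$ (resp.\ $\mathscr{P}(\mod R)$, using left Noetherianness so that $\mod R$ is abelian with enough projectives) satisfies the standing hypotheses and is a projective generator is precisely the bookkeeping the authors leave implicit.
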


Let $\spe(\mathcal{G}(\mathscr{C}))$ be the subcategory of $\mathscr{A}$ consisting of objects
admitting special $\mathcal{G}(\mathscr{C})$-preenvelopes. We point out that
the dual versions on ${^{\bot_1}\mathcal{G}(\mathscr{C})}$ and $\spe(\mathcal{G}(\mathscr{C}))$
of all of the above results also hold true by using completely dual arguments.

\vspace{0.5cm}

{\bf Acknowledgements.}
This research was partially supported by NSFC (Grant No. 11571164),
a Project Funded by the Priority Academic Program Development of Jiangsu Higher Education Institutions,
the University Postgraduate Research and Innovation Project of Jiangsu Province 2016 (No. KYZZ16\_0034),
Nanjing University Innovation and Creative Program for PhD candidate (No. 2016011).
The authors thank the referees for the useful suggestions.

\end{document}